\documentclass[11pt]{amsart}
\usepackage[T1]{fontenc}
\usepackage[utf8]{inputenc}
\usepackage{lmodern}
\usepackage{amssymb,mathtools,booktabs}
\usepackage{microtype}
\usepackage[hidelinks,unicode]{hyperref}
\hypersetup{pdftitle={Finite-rank rigidity and Nash-curve discreteness for correlation potentials},pdfsubject={AMS author-review research manuscript},pdfauthor={Josiah Park}}
\theoremstyle{plain}
\newtheorem{theorem}{Theorem}[section]
\newtheorem{lemma}[theorem]{Lemma}
\newtheorem{proposition}[theorem]{Proposition}
\newtheorem{corollary}[theorem]{Corollary}
\theoremstyle{definition}
\newtheorem{definition}[theorem]{Definition}
\newtheorem{example}[theorem]{Example}
\theoremstyle{remark}

\numberwithin{equation}{section}
\newcommand{\R}{\mathbb R}
\newcommand{\C}{\mathbb C}
\newcommand{\Q}{\mathbb Q}
\newcommand{\Prob}{\mathcal P}
\newcommand{\supp}{\operatorname{supp}}
\newcommand{\rank}{\operatorname{rank}}
\newcommand{\ord}{\operatorname{ord}}
\newcommand{\Span}{\operatorname{span}}
\allowdisplaybreaks[1]
\author{Josiah Park}
\email{josiahmicahpark@gmail.com}
\title[Finite rank and Nash-curve discreteness]{Finite-rank rigidity and Nash-curve discreteness for correlation potentials}
\date{October 28, 2026}
\subjclass[2020]{Primary 42C15; Secondary 14P20, 31C20, 32C05, 52A40}
\keywords{p-frame energy; finite-rank kernel; Nash curve; correlation potential; minimizing measure; ramification; positive semidefinite kernel.}
\begin{document}
\begin{abstract}
For every nonconstant real polynomial $P$ of degree $k$ and every positive rational exponent $a/b$, minimizing the spherical energy with potential $|P(t)|^{a/b}$ admits a discrete solution with at most
\[
\binom{d+2ak-1}{2ak}+\binom{d+2ak-2}{2ak-1}
\]
support points, where $a/b$ is reduced. This finite-replacement consequence needs no denominator threshold. Under a multiplicity-sensitive denominator condition, we prove the stronger conclusion that \emph{every} minimizer has an explicit uniform support bound; suitable negative positive-semidefinite perturbations preserve that bound. We present these consequences before the machinery. The main arguments simplify irrational-power finite-rank rigidity and give a Nash-curve ramification obstruction, stable under entire scalar additions, for powers, logarithms, and rational poles. The latter yields uniform finite supports on compact semialgebraic curves without assuming that the supports are semialgebraic. The known $p$-frame finite-replacement principle is distinguished from the extensions developed here. No unconditional discreteness theorem for general irrational $p$ is claimed.
\end{abstract}
\maketitle

\section{Introduction: discrete minimizers and their support sizes}\label{sec:intro}

Let $d\ge2$, let $Y$ be a nonempty compact subset of $S^{d-1}$, and let $f:[-1,1]\to\R$ be continuous. We minimize
\begin{equation}\label{eq:energy}
I_f(\mu)=\iint f(x\cdot y)\,d\mu(x)d\mu(y),\qquad
e_Y(f)=\min_{\mu\in\Prob(Y)}I_f(\mu).
\end{equation}
The minimum exists. A measure $\sum_iw_i\delta_{x_i}$ describes a weighted configuration; its energy is $\sum_{i,j}w_iw_jf(x_i\cdot x_j)$, including self-interactions. We allow arbitrary probability measures and arbitrary signs of $f$. All interactions are between two points. For $f(t)=|t|^p$, this is the $p$-frame problem. If $f$ is even, one may identify antipodal points, but support counts in this paper refer to points of the actual sphere unless stated otherwise.

\subsection{First consequence: a discrete optimizer with a known atom budget}

For rational polynomial-root potentials, one can replace optimization over all probability measures by optimization over a bounded number of points and their weights. The following consequence is the quickest application of the finite-rank method. It does \emph{not} require a large denominator, a semialgebraic constraint set, or any regularity of the minimizing support.

\begin{corollary}[Existence and size of a discrete minimizer]\label{cor:intro-existence}\label{prop:finite-replacement}
Let $P\in\R[t]$ be nonconstant of degree $k$, and let $a,b$ be positive coprime integers. On every nonempty compact $Y\subset S^{d-1}$, the energy with potential $f(t)=|P(t)|^{a/b}$ has a global minimizer
\begin{equation}\label{eq:intro-discrete-minimizer}
\mu_* = \sum_{i=1}^{n}w_i\delta_{x_i},\qquad
x_i\in Y,\quad w_i>0,\quad \sum_iw_i=1,\quad
n\le L_d(2ak),
\end{equation}
where
\begin{equation}\label{eq:replacement-bound}
L_d(q)=\binom{d+q-1}{q}+\binom{d+q-2}{q-1}\qquad(q\ge1).
\end{equation}
Every minimizing measure has such a minimizing replacement contained in its support. If $P(1)=0$, one atom suffices.

For the $p$-frame potential $|t|^p$, with reduced $p=a/b>0$, the atom budget improves to
\begin{equation}\label{eq:intro-pframe-budget}
R_p=
\begin{cases}
\displaystyle\binom{d+a-1}{a},&a\text{ even},\\[6pt]
\displaystyle\binom{d+2a-1}{2a},&a\text{ odd}.
\end{cases}
\end{equation}
These are existence and replacement bounds, not bounds on the support of every original minimizer.
\end{corollary}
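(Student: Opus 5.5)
The plan is to reduce the statement to a Carathéodory-type moment argument. The needed input is that, on the support of a minimizer, the kernel $f(x\cdot y)=|P(x\cdot y)|^{a/b}$ acts like a kernel of finite rank built from polynomials of degree at most $2ak$. Fix a minimizer $\mu$ of $I_f$ on $\Prob(Y)$; one exists by compactness and continuity. Let $S=\supp\mu\subset Y$. The first-order conditions give $U_\mu(x)=\int f(x\cdot y)\,d\mu(y)=e_Y(f)$ on $S$ and $U_\mu\ge e_Y(f)$ on $Y$.

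The second-order condition says $I_f(\nu)\ge0$ for every signed measure $\nu$ of total mass zero with $|\nu|\le C\mu$. So $f(x\cdot y)$ is conditionally positive semidefinite on $S$ along $\mu$-dominated directions. I would then invoke the paper's finite-rank rigidity result for rational powers of polynomials. It should yield the following: on $S\times S$, the quadratic form $\nu\mapsto I_f(\nu)$ restricted to admissible $\nu$ depends only on the pairings $\int q\,d\nu$ with $q$ in the space $\mathcal V_{2ak}$ of polynomials of degree $\le 2ak$ restricted to the sphere. The mechanism is that $f^{2b}=P^{2a}$ is a polynomial of degree $2ak$ in $x\cdot y$, and $P(x\cdot y)^{2a}$ lies in the span of products $\phi(x)\psi(y)$ with $\phi,\psi\in\mathcal V_{2ak}$. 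The count $L_d(q)=\binom{d+q-1}{q}+\binom{d+q-2}{q-1}$ is exactly $\dim\mathcal V_q$: homogeneous harmonic-type polynomials of degree $q$ together with those of degree $q-1$ span all polynomials of degree $\le q$ on $S^{d-1}$.

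Granting that reduction, the replacement step is routine. Consider the convex, weak-$*$ compact set
\[
K=\Bigl\{\nu\in\Prob(S): \int q\,d\nu=\int q\,d\mu\ \text{for all } q\in\mathcal V_{2ak}\Bigr\}.
\]
Since constants lie in $\mathcal V_{2ak}$, $K$ is cut out of the signed measures by $L_d(2ak)$ linear conditions. By the Carathéodory/Winkler extreme-point theorem, $K$ has an extreme point with at most $L_d(2ak)$ atoms. Every $\nu\in K$ has the same $\mathcal V_{2ak}$-moments as $\mu$, so the finite-rank structure gives $I_f(\nu)=I_f(\mu)=e_Y(f)$. Hence $\nu$ is a minimizer supported in $S\subset Y$ with $n\le L_d(2ak)$. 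This gives both the existence claim and the ``replacement inside the support'' claim.

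If $P(1)=0$, then $f\ge0$ and $f(1)=0$, so a single Dirac mass $\delta_x$ with $x\in Y$ has energy $0=\min I_f$. For $|t|^p$ with $p=a/b$ I would take $P(t)=t$, $k=1$, and use homogeneity in place of $\mathcal V_{2a}$. When $a$ is odd, $(x\cdot y)^{2a}$ is a sum of products of homogeneous monomials of degree $2a$, and on the sphere the even polynomials of degree $\le 2a$ reduce to homogeneous ones of degree $2a$. The moment space then has dimension $\binom{d+2a-1}{2a}$, and constants are included because $|x|^{2a}=1$. When $a$ is even, $|t|^a=t^a$ is already a polynomial, so the same argument runs with homogeneous degree $a$, giving $\binom{d+a-1}{a}$. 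In both cases the Carathéodory step is unchanged.

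The main obstacle is the rigidity step. I must show rigorously that, on $S$, the non-polynomial kernel $|P|^{a/b}$ has no energy content beyond the $\mathcal V_{2ak}$-moments. That is, I must show $I_f(\nu)=I_f(\mu)$ for all $\nu\in K$, not only for perturbations dominated by $\mu$: extreme points of $K$ need not be dominated by $\mu$. My first attempt would combine conditional positive semidefiniteness on $S$ with the algebraic identity $f^{2b}=P^{2a}$, via the paper's finite-rank rigidity theorem. The aim is to force $f(x\cdot y)|_{S\times S}$ to lie in $\mathcal V_{2ak}\otimes\mathcal V_{2ak}$ modulo terms that vanish on mass-zero measures. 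Failing a clean identity, I would settle for the inequality $I_f(\nu)\le I_f(\mu)$ for $\nu\in K$. That inequality suffices, because $\mu$ is already a minimizer.
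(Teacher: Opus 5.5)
\textbf{Verdict: there is a genuine gap.} Your Carath\'eodory-over-moments framework is the right shape, but the step carrying all the weight is left unproved, and you point to the wrong tool for it. That step is the claim that $I_f(\nu)=I_f(\mu)$ for every probability $\nu$ on $S$ with the same $\mathcal V_{2ak}$-moments as $\mu$.

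Theorem~\ref{thm:irrational-rank} concerns \emph{irrational} exponents and is an obstruction to finite rank. It says nothing that would make the energy of $|P|^{a/b}$ depend only on polynomial moments. The identity $f^{2b}=P^{2a}$ by itself controls only the sections of $f^{2b}$, not those of $f$. Your second-order condition along $\mu$-dominated directions cannot reach the extreme points of your moment set, as you note yourself. The fallback inequality $I_f(\nu)\le I_f(\mu)$ is equally unsupported.

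\textbf{Missing ingredients.} First, since $f\ge0$, the minimum $E$ is nonnegative. Lemma~\ref{lem:support} says $K-E$ is positive semidefinite on $S$, so $K(x,y)=f(x\cdot y)$ \emph{itself} is positive semidefinite on every finite subset of $S$. Because this is a pointwise statement on $S\times S$, it removes any need for domination by $\mu$.

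Second, Lemma~\ref{lem:schur-rank} then gives $\rank K\le\rank K^{\circ 2b}\le L_d(2ak)$; positivity is essential here. This yields a continuous Gram map $h:S\to\R^r$ with $r\le L_d(2ak)$ and $I_f(\nu)=|\int h\,d\nu|^2$ for every probability $\nu$ supported in $S$.

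\textbf{How the paper finishes.} It applies Carath\'eodory to $h$-moments directly. Put $m=\int h\,d\mu$. The contact condition $h(x)\cdot m=E=|m|^2$ on $S$ places $h(S)$ in an affine hyperplane, so $m$ is a convex combination of at most $r$ points of $h(S)$. The resulting atomic measure has energy $|m|^2=E$ and is supported in $S$.

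\textbf{How to repair your own route.} If you want to keep your polynomial-moment set, the same contact identity closes the gap when $P(1)\ne0$, where $E>0$. On $S$, each linear function $\ell(h(x))$ equals $\ell(h(x))\bigl(h(x)\cdot m/E\bigr)^{2b-1}$. This is a homogeneous form of degree $2b$ in $h(x)$. The span of the vectors $h(y)^{\otimes 2b}$, $y\in S$, is finite-dimensional, so on $S$ such a form is a finite combination of $(h(x)\cdot h(y_j))^{2b}=P(x\cdot y_j)^{2a}$. It therefore lies in $\mathcal V_{2ak}|_S$. Matching $\mathcal V_{2ak}$-moments then forces matching $h$-moments, hence $I_f(\nu)=E$.

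For the $p$-frame case with $a$ even, use $\bigl(h\cdot m/E\bigr)^{b-1}$ and $K^{\circ b}=(x\cdot y)^a$ instead. For $a$ odd, use exponent $2b$ with the homogeneous degree-$2a$ space; it contains constants because $|x|^{2a}=1$.

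A minor point: for the replacement claim when $P(1)=0$, take the Dirac mass at a point of $\supp\mu$, not at an arbitrary point of $Y$.
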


\begin{proof}[The short argument]
Take a minimizer $\mu$, put $S=\supp\mu$, and write $K(x,y)=|P(x\cdot y)|^{a/b}$. The energy is nonnegative, so the support-positivity lemma below makes $K$ positive semidefinite on $S$. Its entrywise power satisfies
\begin{equation}\label{eq:intro-rank-identity}
K^{\circ 2b}(x,y)=P(x\cdot y)^{2a}.
\end{equation}
The sections on the right are polynomial functions of degree at most $2ak$ on the sphere, a space of dimension $L_d(2ak)$. Indeed, multiplying monomials by powers of $|x|^2=1$ reduces them to homogeneous degree $2ak$ or $2ak-1$; these restriction spaces are independent by parity. The Schur-power rank lemma below therefore gives $r:=\rank K\le L_d(2ak)$.

If $P(1)\ne0$, positivity of $K$ near the diagonal gives $E=I_f(\mu)>0$. Choose a continuous feature map $h:S\to\R^r$ and set $m=\int h\,d\mu$. The contact condition is
\begin{equation}\label{eq:feature-hyperplane}
h(x)\cdot m=E=|m|^2\quad(x\in S).
\end{equation}
Thus $h(S)$ lies in an affine hyperplane of dimension at most $r-1$. Carath\'eodory's theorem expresses $m$ using at most $r$ feature vectors from $h(S)$. The corresponding atomic measure has the same energy $|m|^2$ and is supported in $S$. If $P(1)=0$, any point mass has the zero minimum.

For $P(t)=t$, the right side of~\eqref{eq:intro-rank-identity} is homogeneous of degree $2a$, whose section rank is at most $\dim\operatorname{Sym}^{2a}(\R^d)=\binom{d+2a-1}{2a}$. If $a$ is even, already $K^{\circ b}(x,y)=(x\cdot y)^a$, giving the smaller bound. Lemmas~\ref{lem:support} and~\ref{lem:schur-rank} give the two elementary inputs used above.
\end{proof}

\paragraph{What this means in an optimization problem.}
With $R=L_d(2ak)$, the corollary gives the exact equality
\begin{equation}\label{eq:intro-finite-optimization}
e_Y(|P|^{a/b})=
\min_{\substack{x_1,\ldots,x_R\in Y\\
w_i\ge0,\ \sum_iw_i=1}}
\sum_{i,j=1}^{R}w_iw_j|P(x_i\cdot x_j)|^{a/b}.
\end{equation}
Zero weights pad a smaller configuration to $R$ terms. This is an equality of global minima, not an approximation or an algorithm for finding the minimizing configuration. For example, $|t|^{5/2}$ on $S^2$ has a minimizing measure with at most $\binom{12}{10}=66$ atoms, and $(1+t^2)^{a/b}$ on $S^{d-1}$ has one with at most $L_d(4a)$ atoms for \emph{every} positive rational $a/b$. The budget is independent of $b$ when the reduced numerator $a$ is fixed. The $p$-frame finite-replacement principle is a prior ingredient~\cite{P26}, not a new discreteness claim for all minimizers.

\subsection{Stronger consequence: a bound for every minimizer}

An additional denominator condition upgrades existence of one discrete optimizer to discreteness of \emph{every} optimizer. This is where the Nash-curve argument enters.

\begin{corollary}[Uniform support bound on the sphere]\label{cor:intro-uniform}
Keep the preceding notation and assume $P(1)\ne0$. Suppose $P$ has a complex root $\zeta$ of multiplicity $e_\zeta$ such that
\begin{equation}\label{eq:intro-denominator-condition}
\frac{b}{\gcd(b,e_\zeta)}\ge
\begin{cases}
d,&\zeta=0,\\
d+1,&\zeta\ne0.
\end{cases}
\end{equation}
Then every minimizing measure on every nonempty compact $Y\subset S^{d-1}$ satisfies
\begin{equation}\label{eq:intro-every-support-bound}
|\supp\mu|\le
\bigl(2\max\{2b,2ak\}-1\bigr)^{\,d+L_d(2ak)}.
\end{equation}
In particular, the conclusion applies to rational $p$-frame energies with $p=a/b$ and $b\ge d$.
\end{corollary}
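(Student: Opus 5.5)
The plan is to combine the finite-rank structure from Corollary~\ref{cor:intro-existence} with a ramification argument along algebraic curves, and then to count connected components with a Milnor--Thom type bound. Let $\mu$ be any minimizer and put $S=\supp\mu$. As in the short argument, $K(x,y)=|P(x\cdot y)|^{a/b}$ is positive semidefinite on $S$ and has rank $r\le L_d(2ak)$. Since $P(1)\ne0$, the energy $E$ is positive. Choose a feature map $h:S\to\R^r$. The contact condition~\eqref{eq:feature-hyperplane} then gives, for each $x\in S$, a linear relation $\sum_j c_j K(x,y_j)=E$ with fixed points $y_1,\dots,y_r\in S$ spanning the feature space. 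Since $m=\sum_j c_jh(y_j)$ for suitable coefficients $c_j$, we get
\[
\sum_{j=1}^r c_j\,|P(x\cdot y_j)|^{a/b}=E\qquad(x\in S).
\]
The aim is to show that the set $Z\subset S^{d-1}$ cut out by this equation, intersected with $Y$, is finite with the stated cardinality. The first step is to establish this identity and to check that it defines a semialgebraic set. Sign patterns of the factors $P(x\cdot y_j)$ split the sphere into semialgebraic cells. On each cell, raising to the power $b$ and eliminating the radicals (take $u_j^{b}=\pm P(x\cdot y_j)^a$) gives a polynomial system in the $d+r$ variables $(x,u)$, with degrees at most $\max\{2b,2ak\}$ after squaring to handle signs.

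The second step, which is the main obstacle, is to rule out positive-dimensional pieces. Suppose $Z$ contained a one-dimensional Nash curve through some point of $S$. Since $S$ is not assumed semialgebraic, this requires care. I would instead show directly that the semialgebraic set $Z$ has dimension $0$, so that $S\subset Z$ is finite automatically. Suppose $\gamma$ were an analytic arc in $Z$. The root $\zeta$ of multiplicity $e_\zeta$ enters here. Choose $y=y_j$ and move along $\gamma$ toward a point where $x\cdot y$ equals a root, or complexify the Nash curve so that it reaches such a point; this is where $\zeta\ne0$ costs one extra dimension, giving the $d$ versus $d+1$ in~\eqref{eq:intro-denominator-condition}. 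Near that point the term $|P(x\cdot y)|^{a/b}$ has a Puiseux expansion with exponent $ae_\zeta/b$ times a vanishing order. Its reduced denominator $b/\gcd(b,e_\zeta)$ exceeds every ramification index that a Nash curve in dimension $d$ (respectively $d+1$) can produce for the other terms and for the constant $E$. The identity along $\gamma$ would then force the coefficient $c_j$ to vanish, contradicting the spanning choice. I would try to rely on the paper's forthcoming Nash-curve ramification lemma, which is stable under adding entire scalar functions, applied to the analytically continued identity. The dimension threshold comes from bounding the ramification of the coordinate function $x\cdot y$ along a Nash curve in $S^{d-1}$, respectively after homogenizing.

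The third step is the count. Once $Z$ is finite, Milnor--Thom (or Bézout over the lifted variety in $\R^{d+r}$) bounds the number of isolated points. The number of connected components of a real variety defined by polynomials of degree at most $D=\max\{2b,2ak\}$ in $N=d+r$ variables is at most $D(2D-1)^{N-1}\le(2D-1)^{N}$. Projecting from $(x,u)$ to $x$ does not increase the count. Together with $r\le L_d(2ak)$, this gives~\eqref{eq:intro-every-support-bound}. The $p$-frame case is $P(t)=t$ with $\zeta=0$ and $e_\zeta=1$, so the condition reduces to $b\ge d$.
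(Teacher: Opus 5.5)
\textbf{The gap is in your second step.} The set $Z$ cut out by the single contact equation $\sum_j c_j|P(x\cdot y_j)|^{a/b}=E$ is one scalar condition on $S^{d-1}$. In your lift it is $r+2$ equations in $d+r$ unknowns. So for $d\ge3$ it is typically positive-dimensional, and no argument can show $\dim Z=0$.

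Here is a concrete case. Take $d=3$, $p=1/3$ (so $b=3\ge d$ and the corollary applies), and $Y=\{e_1,e_2\}$. The minimizer is $\tfrac12(\delta_{e_1}+\delta_{e_2})$ with $E=\tfrac12$, the kernel on $S$ is the identity, and $h(e_i)=e_i$. Your equation becomes $|x_1|^{1/3}+|x_2|^{1/3}=1$ on $S^2$. Every point of this planar curve lies in the unit disk, since $x_i^2\le|x_i|^{1/3}$, and it lifts with $x_3=\pm\sqrt{1-x_1^2-x_2^2}$. So $Z$ contains a whole closed curve. Intersecting with $Y$ does not rescue the argument: $Y$ is an arbitrary compact set, and your Milnor--Thom/B\'ezout count only sees the algebraic lift.

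The ramification sketch also has no source for the thresholds $d$ and $d+1$. With the anchors $y_j$ fixed, $z\mapsto\gamma(z)\cdot y_j-\zeta$ is a single meromorphic function on the curve. Its zeros can have any order, or it can be constant (take $\gamma$ inside a circle $x\cdot y_j=\mathrm{const}$). In the paper, the bound $\ell\le r_\zeta-1$ of Lemma~\ref{lem:moving-zero} (at most $d-1$ or $d$) comes from a Wronskian of the whole pencil $\{A_t-\zeta\}$ as the \emph{second} point $t$ moves along the same arc. That requires the identity $B(s,t)^{2b}=P(A(s,t))^{2a}$ for pairs of points both on the arc. Finally, a forced $c_j=0$ is no contradiction: the $c_j$ are just coordinates of $m$, and singular terms from different anchors can also share branch points and cancel.

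\textbf{The missing idea is an algebraic envelope on which the \emph{two-point} identity holds for every pair,} with $u$ the full feature vector rather than values against fixed anchors. The paper (Theorem~\ref{thm:uniform-global}) proceeds as follows.
\begin{enumerate}
\item Write $T(z,w)=(u\cdot v)^{2b}-P(x\cdot y)^{2a}=\nu(z)^TJ\nu(w)$, where $\nu$ lists the monomials of degree at most $m=\max\{2b,2ak\}$.
\item Let $W$ be the span of $\nu(x,h(x))$ for $x\in S$; it is $J$-isotropic.
\item Set $Z_W=\{(x,u):|x|^2=1,\ |u|^2=c,\ \nu(x,u)\in W\}$. This set is cut out by polynomials of degree at most $m$ in $d+L_d(2ak)$ variables and contains the graph of $h$. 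It satisfies $T=0$ on $Z_W\times Z_W$, and by Cauchy--Schwarz its $x$-fibres are contained in $\{\pm u\}$.
\item If $Z_W$ were infinite, a Nash arc in it would give the meromorphic pair identity. The moving-zero lemma and the order count $b\,\ord B=ae_\zeta\ell$ then give $b/\gcd(b,e_\zeta)\mid\ell\le r_\zeta-1$, contradicting the hypothesis.
\item So $Z_W$ is finite, and Lemma~\ref{lem:finite-real-zeros} gives $|S|\le|Z_W|\le(2m-1)^{d+L_d(2ak)}$.
\end{enumerate}
Your opening (positivity from Lemma~\ref{lem:support} and the Schur rank bound) and your final counting step go through once $Z$ is replaced by $Z_W$.
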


\begin{proof}
Theorem~\ref{thm:uniform-global} bounds every positive-semidefinite restriction of the kernel by~\eqref{eq:intro-every-support-bound}. Lemma~\ref{lem:support} makes the kernel positive semidefinite on a minimizing support, since the energy is nonnegative. Compactness ensures existence of a minimizing measure.
\end{proof}

The all-minimizer bound~\eqref{eq:intro-every-support-bound} is intentionally coarse and much larger than the existence bound~\eqref{eq:intro-discrete-minimizer}. The two statements answer different questions. For example, uniform surface probability is a diffuse minimizer of the $2$-frame energy, although a discrete minimizing replacement exists.

\paragraph{An explicit family beyond $p$-frame energies.}
If $a,b$ are positive coprime integers with $b\ge d+1$, $j\ge1$, and $0\le\varepsilon\le1$, then every minimizer for
\begin{equation}\label{eq:intro-global-example}
f(t)=(1+t^2)^{a/b}-\varepsilon t^{2j}
\end{equation}
on the sphere, or on any compact sphere constraint, has at most
\begin{equation}\label{eq:intro-perturbed-bound}
N_{d,a,b}=\bigl(2\max\{2b,4a\}-1\bigr)^{\,d+L_d(4a)}
\end{equation}
support points. The bound is independent of $j$ and $\varepsilon$. More generally, the same statement holds after replacing $\varepsilon(x\cdot y)^{2j}$ by any continuous positive-semidefinite kernel $Q(x,y)$ with $\sup_xQ(x,x)\le1$; see Corollary~\ref{cor:explicit-family}. The base potential is at least one, so the modified energy is nonnegative. This is why the positive-support bound applies to the perturbed problem.

\subsection{Three routes and the role of each hypothesis}

The central question is whether a minimizing measure must have finite support. Three logically different mechanisms are relevant.
\begin{enumerate}
\item \emph{Finite-rank rigidity:} if the support kernel has finite rank, a nonintegral power may force the support itself to be finite.
\item \emph{Nash-curve obstruction:} if an infinite support contains, or accumulates along, an algebraic analytic curve, a complex branch or pole can contradict support positivity.
\item \emph{Automatic finite rank:} rational polynomial-root identities turn support positivity into a finite-rank bound, allowing Nash curve selection in an auxiliary algebraic set.
\end{enumerate}
The first mechanism does not establish its finite-rank hypothesis. In particular, the irrational-power theorem in Section~\ref{sec:irrational-rank} does not by itself yield the existence or support-size conclusions above: those use the \emph{rational} Schur-power identity~\eqref{eq:intro-rank-identity}. The second mechanism does not put a Nash arc inside an arbitrary infinite compact set. The third is an unconditional route for a substantial rational class, not for arbitrary irrational exponents.

\subsection{What is simplified and what is generalized}

The finite-rank approach to $p$-frame discreteness is developed in~\cite{P26}. We replace the specialized irrational-power rigidity input by two short lemmas: an irrational power of a nonconstant rational function cannot be rational, and a logarithmic derivative extends a finite-rank kernel identity to an algebraic closure. This proves a larger local-germ statement. If, near $t=1$,
\begin{equation}\label{eq:intro-germ}
f(t)=c\left(\frac{P(t)}{P(1)}\right)^\alpha+Q(t),
\end{equation}
where $P$ is nonconstant, $P(1)\ne0$, $Q$ is polynomial, $c\ne0$, and $\alpha$ is irrational, then finite rank of $f(x\cdot y)$ on a set forces that set to be finite (Theorem~\ref{thm:irrational-rank}). Positivity is not required. Only the germ near the diagonal matters.

For Nash arcs we use a different and more flexible principle. A Nash arc is a real-analytic parametrized curve whose coordinate functions are algebraic over the parameter. Its complexification lies on an algebraic curve, so its coordinates become meromorphic on a compact Riemann surface. Inner-product sections then have moving zeros of controlled multiplicity. A scalar singularity that cannot be removed by those multiplicities is incompatible with a holomorphic Hilbert-space representation of a positive kernel. The resulting criterion, Theorem~\ref{thm:nash-obstruction}, is unchanged by adding \emph{any entire scalar function real on the real axis}; no smallness or positivity condition is imposed on that addition.

This gives concrete non-$p$-frame families. For $a>0$, each of
\begin{equation}\label{eq:intro-examples}
\log(a+t^2)+H(t),\qquad
\frac1{a+t^2}+H(t),\qquad
(a+t^2)^\alpha+H(t)\quad(\alpha\notin\Q)
\end{equation}
has finite minimizing supports on every compact semialgebraic curve in the sphere. Here $H$ is entire and real on $\R$, and $\alpha$ is real. In fact the support sizes have a common finite bound for each fixed potential and admissible curve (Theorem~\ref{thm:curve-minimizers}). The support need not itself be semialgebraic.

The full-sphere conclusions have already been stated in Corollaries~\ref{cor:intro-existence} and~\ref{cor:intro-uniform}, with their respective atom budgets. Theorem~\ref{thm:rational-roots} proves the effective-denominator criterion, and Theorem~\ref{thm:uniform-global} makes the all-minimizer bound explicit. Neither requires any geometric assumption on the support. The perturbation principle in Proposition~\ref{prop:psd-stability} explains the non-$p$-frame family~\eqref{eq:intro-global-example} and its bound~\eqref{eq:intro-perturbed-bound}.

\subsection{Prior ingredients and scope of the contribution}

Support positivity and the harmonic approach to the original discreteness problem are established tools~\cite{BGM+21,BGM+22}. The real $p$-frame finite-replacement, denominator, and irrational finite-rank results in~\cite{P26} are starting points, not claimed anew here. Algebraic-power rigidity is related to the Umehara-algebra framework in~\cite{CDY17}; our finite-rank proof does not invoke that theorem. Nash curve selection is standard real algebraic geometry~\cite[Proposition 8.1.13]{BCR98}; normalization and meromorphic divisors are standard Riemann-surface theory~\cite{F81}. We prove explicitly the local Hilbert-norm criterion and continuation step, in the spirit of the squared-norm method underlying Calabi's criterion~\cite{Calabi1953}.

The emphasized extensions are the polynomial-power finite-rank theorem, the ramification formulation and its entire-perturbation stability, the uniform support bound on semialgebraic curves, and the bounded-degree uniform estimate and perturbation consequences for polynomial roots. The root-denominator and finite-replacement statements also streamline variants already present in the supplied working notes; they are supporting results rather than stand-alone novelty claims. The rational-root section can be read after Sections~\ref{sec:preliminaries} and~\ref{sec:moving-zero}, without the Hilbert-space continuation argument. The accompanying contribution note records earlier-draft overlap and the limits of the source check.

\section{Two elementary support and rank facts}\label{sec:preliminaries}

For a continuous symmetric kernel $K$ on a compact space, write $I_K(\mu)=\iint K\,d\mu\,d\mu$. Positive semidefiniteness means that every finite matrix $[K(x_i,x_j)]$ is positive semidefinite. Kernel rank is the supremum of the ranks of these finite matrices; symmetry is assumed, but positivity need not be.

\begin{lemma}[The centered support kernel]\label{lem:support}
If $\mu$ minimizes $I_K$, with energy $E$ and support $S$, then its potential $U_\mu(x)=\int K(x,y)\,d\mu(y)$ satisfies $U_\mu\ge E$ on the admissible set and $U_\mu=E$ on $S$. Moreover,
\begin{equation}\label{eq:support-positivity}
\sum_{i,j}c_ic_jK(x_i,x_j)\ge E\left(\sum_i c_i\right)^2
\qquad(x_i\in S,\ c_i\in\R).
\end{equation}
Thus $K-E$ is positive semidefinite on $S$. If $E\ge0$, then $K$ is positive semidefinite there as well.
\end{lemma}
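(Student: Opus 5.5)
The plan is the standard first-variation argument, run in three steps.

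\emph{Step 1: $U_\mu\ge E$ on the admissible set.} Fix an admissible point $x$ and, for $0\le s\le1$, set $\mu_s=(1-s)\mu+s\delta_x$. Since $K$ is symmetric,
\[
I_K(\mu_s)=(1-s)^2E+2s(1-s)U_\mu(x)+s^2K(x,x).
\]
The right derivative at $s=0$ is $2(U_\mu(x)-E)$. Minimality of $\mu$ forces this derivative to be $\ge0$, which gives $U_\mu\ge E$ on the admissible set.

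\emph{Step 2: $U_\mu=E$ on $S$.} Integrating gives $\int U_\mu\,d\mu=E$. Since $U_\mu-E\ge0$ and it integrates to zero against $\mu$, it vanishes $\mu$-almost everywhere. The potential $U_\mu$ is continuous, because $K$ is continuous on a compact space. So the set $\{U_\mu>E\}$ is open and has $\mu$-measure zero, hence it is disjoint from $S=\supp\mu$. Therefore $U_\mu=E$ on $S$.

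\emph{Step 3: the inequality \eqref{eq:support-positivity}.} This is the only step needing care, because the perturbations must stay probability measures. Fix $x_i\in S$ and $c_i\in\R$. First reduce to the case $\sum_ic_i=0$. Write $c=\sum_ic_i$ and $c_i=c\,\beta_i+c_i'$ for any fixed weights $\beta_i\ge0$ with $\sum_i\beta_i=1$, so that $\sum_ic_i'=0$. Expanding the quadratic form splits it into three pieces:
\begin{itemize}
\item the term in the $\beta_i$, for which the inequality with $\sum\beta_i=1$ is needed;
\item the mixed term, which is handled by the same argument as the other two;
\item the term in the $c_i'$.
\end{itemize}
A cleaner route is to prove the inequality directly for $\nu=\sum_ic_i\delta_{x_i}$. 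Using $U_\mu=E$ on $S$, one has
\[
\sum_{i,j}c_ic_jK(x_i,x_j)-E c^2
= I_K(\nu-c\mu)+2c\int U_\mu\,d\nu-c^2E-Ec^2 .
\]
Since $\int U_\mu\,d\nu=cE$, the last three terms cancel and this equals $I_K(\nu-c\mu)$. So it suffices to show $I_K(\sigma)\ge0$ for every signed measure $\sigma=\nu-c\mu$ of total mass zero, with $\nu$ atomic on $S$.

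To show $I_K(\sigma)\ge0$, I perturb $\mu$ inside $S$ using small neighborhoods. For each $i$, let $\mu_i^\delta$ be $\mu$ restricted to the ball $B(x_i,\delta)$ and normalized. This is well defined because $\mu(B(x_i,\delta))>0$, as $x_i\in S$. Set $\tau=\sum_ic_i\mu_i^\delta-c\mu$. Then for small $s$ the measure $\mu+s\tau$ is a probability measure, since each $\mu_i^\delta$ has density bounded by a multiple of $\mu$. Its energy is
\[
E+2s\int U_\mu\,d\tau+s^2I_K(\tau)=E+s^2I_K(\tau),
\]
because $U_\mu=E$ on $S$ and $\tau$ has mass zero. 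Minimality then gives $I_K(\tau)\ge0$. Letting $\delta\to0$, $\mu_i^\delta\to\delta_{x_i}$ weakly, and continuity of $K$ gives $I_K(\sigma)\ge0$. This proves \eqref{eq:support-positivity}.

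\emph{Consequences.} The inequality says precisely that every finite matrix $[K(x_i,x_j)-E]$ is positive semidefinite, so $K-E$ is positive semidefinite on $S$. If $E\ge0$, then $E(\sum_ic_i)^2\ge0$, and $K$ itself is positive semidefinite on $S$. I expect the main obstacle to be the admissibility of the perturbation, which is resolved by the localized normalized restrictions $\mu_i^\delta$.
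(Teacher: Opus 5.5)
Your proof is correct and follows essentially the same route as the paper. It uses the same first variation toward a point mass, integration plus continuity to get equality on $S$, and the centered perturbation $\sum_i c_i\mu_i^\delta-c\mu$ built from normalized restrictions of $\mu$ to shrinking neighborhoods, whose linear term vanishes and whose nonnegative quadratic term passes to the limit $\delta\to0$; your identity $\sum_{i,j}c_ic_jK(x_i,x_j)-Ec^2=I_K(\nu-c\mu)$ is just the explicit form of the paper's ``expanding that term'' step, and the abandoned $\beta_i$/$c_i'$ decomposition can simply be deleted.
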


\begin{proof}
Variation toward a point mass gives $U_\mu\ge E$; integrating this inequality and using continuity gives equality on the support. Approximate each $\delta_{x_i}$ by the normalized restriction $\rho_{i,n}$ of $\mu$ to a shrinking positive-mass neighborhood. Put $c=\sum_i c_i$ and $\eta_n=\sum_i c_i\rho_{i,n}-c\mu$. This is centered and has bounded density relative to $\mu$, so both signs of $\mu+t\eta_n$ are feasible for small $|t|$. The linear energy term vanishes, and the quadratic term is nonnegative. Expanding that term and passing to the limit proves~\eqref{eq:support-positivity}.
\end{proof}

The centering is indispensable for signed potentials. None of the curve arguments below assumes that the original kernel is positive semidefinite on a minimizing support when its minimum is negative.

\begin{lemma}[A Schur-power rank bound]\label{lem:schur-rank}
If $K$ is positive semidefinite and $m\ge1$ is an integer, then
\begin{equation}\label{eq:schur-rank}
\rank K\le\rank(K^{\circ m}),
\end{equation}
where $K^{\circ m}(x,y)=K(x,y)^m$.
\end{lemma}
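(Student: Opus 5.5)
The plan is to reduce to finite matrices and then find a principal submatrix of $K$ of full rank that stays nonsingular under Schur powers. Since rank is defined as a supremum over finite Gram matrices, it suffices to show the following. For every finite set $x_1,\dots,x_n$, the positive semidefinite matrix $A=[K(x_i,x_j)]$ satisfies $\rank A\le\rank A^{\circ m}$. The right side is at most $\rank(K^{\circ m})$, because $A^{\circ m}$ is a finite section of $K^{\circ m}$. Taking the supremum over finite sets gives~\eqref{eq:schur-rank}. If the rank of $K$ is infinite, the same argument shows that the finite sections of $K^{\circ m}$ have unbounded rank.

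For a fixed finite matrix, I would set $r=\rank A$. Because $A$ is positive semidefinite, it has a principal $r\times r$ submatrix $B$ that is positive definite. To see this, write $A=G^{\mathsf T}G$ with Gram vectors $g_i$ and choose $r$ linearly independent $g_i$; their Gram matrix $B$ is then nonsingular. Next, $B^{\circ m}$ is a principal submatrix of $A^{\circ m}$, so $\rank A^{\circ m}\ge\rank B^{\circ m}$. It therefore suffices to show that $B^{\circ m}$ is positive definite, hence of rank $r$.

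The key step is the Schur product theorem in its strict form: the Hadamard product of two positive definite matrices is positive definite. I would argue by induction on $m$ and verify the strict form directly. Write $B=\sum_k \lambda_k u_ku_k^{\mathsf T}$ with all $\lambda_k>0$. Then for any $v$,
\[
v^{\mathsf T}(B\circ C)v=\sum_k\lambda_k\,(v\circ u_k)^{\mathsf T}C(v\circ u_k).
\]
When $C$ is positive definite, each term is nonnegative. The sum vanishes only if $v\circ u_k=0$ for every $k$. Since the $u_k$ span $\R^r$, this forces every coordinate of $v$ to vanish. Applying this repeatedly with $C=B^{\circ(m-1)}$ shows that $B^{\circ m}$ is positive definite, hence of rank $r$.

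The only real obstacle is the strictness in this Schur-product step. Plain positive semidefiniteness of $A^{\circ m}$ would not control its rank, which is why the argument first passes to a positive definite principal block before taking Hadamard powers. Everything else is routine bookkeeping with the supremum definition of kernel rank.
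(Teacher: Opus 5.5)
Your proof is correct, and its outer structure matches the paper's: reduce to a finite section $A$ of rank $r$, select $r$ points whose Gram vectors are linearly independent, and show that the corresponding nonsingular principal block stays nonsingular under the Schur power. The two proofs differ only in that last step.

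The paper keeps the Gram vectors. $A^{\circ m}$ is the Gram matrix of the tensors $v_i^{\otimes m}$, and if $v_1,\dots,v_r$ are independent with dual functionals $\ell_j(v_i)=\delta_{ij}$, then applying $\ell_j^{\otimes m}$ to a relation $\sum_i c_i v_i^{\otimes m}=0$ gives $c_j=0$. This handles every $m$ at once, with no induction and no spectral theorem. You instead prove the strict Schur product theorem from the eigen-decomposition of $B$ and iterate it $m-1$ times.

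Your route is entirely matrix-theoretic and establishes a classical fact of independent use: the Hadamard product of two positive definite matrices is positive definite. The paper's route is shorter and more conceptual. Independence of tensor products is really the content behind the Schur product theorem, whose standard proof exhibits $B\circ C$ as a principal submatrix of $B\otimes C$.
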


\begin{proof}
For a finite Gram representation $K(x_i,x_j)=v_i\cdot v_j$, the power is the Gram matrix of $v_i^{\otimes m}$. If $v_1,\ldots,v_r$ are independent, take dual linear functionals $\ell_j(v_i)=\delta_{ij}$. Applying $\ell_j^{\otimes m}$ to a relation among the tensor powers proves their independence. Take suprema over finite subsets.
\end{proof}

Positivity cannot be omitted: $(1+st)^{1/2}$ has infinite rank on a small real interval, whereas its square has rank two. Its infinitely many nonzero Taylor coefficients show the infinite rank; their alternating signs explain the failure of positivity.

We will also use the following finite-rank representation. If a real symmetric kernel has rank $r<\infty$, choose $r$ anchors $z_i$ with invertible principal Gram matrix $G$. Then
\begin{equation}\label{eq:anchor-representation}
K(x,y)=k(x)^TG^{-1}k(y),\qquad k_i(x)=K(x,z_i).
\end{equation}
Indeed, the Schur complement after adjoining two further points is zero. Such anchors exist because a finite symmetric rank-$r$ matrix has an invertible principal minor of size $r$. For a continuous positive kernel,~\eqref{eq:anchor-representation} gives the continuous Euclidean feature map $G^{-1/2}k(x)$. For a general symmetric kernel it gives a finite-dimensional representation with a possibly indefinite bilinear form.

\section{An elementary irrational-power rank theorem}\label{sec:irrational-rank}

The proof uses algebraic geometry only through one familiar fact: a nonconstant meromorphic function on a compact Riemann surface has a zero or pole of nonzero integer order. In particular, the argument does not need positivity or a global single-valued fractional power.

\begin{lemma}[Rational functions cannot have irrational power ratios]\label{lem:rational-power}
Let $V$ be an irreducible complex algebraic variety, and let $R,S$ be nonzero rational functions on $V$. If, on a nonempty analytic open subset,
\begin{equation}\label{eq:rational-power}
S=R^\alpha
\end{equation}
for a holomorphic branch and $\alpha\notin\Q$, then $R$ and $S$ are constant.
\end{lemma}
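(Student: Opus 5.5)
The plan is to reduce to a compact Riemann surface and compare orders of zeros and poles, as the remark opening this section suggests. Suppose $R$ is nonconstant. Pick a point of the open set $U$ where $R$ has nonzero differential, and choose an irreducible algebraic curve $C\subset V$ through a nearby point of $U$ on which $R|_C$ is nonconstant. A general complete-intersection curve through that point works, or a curve in the closure of a general fibre direction. Identities of rational functions on $V$ restrict to $C$, and the branch identity $S=R^\alpha$ restricts to $C\cap U$, which is a nonempty analytic open subset of $C$. Let $\nu:X\to\overline{C}$ be the normalization of the projective closure. Then $X$ is a compact Riemann surface, and $r=R\circ\nu$ and $s=S\circ\nu$ are meromorphic on $X$ with $r$ nonconstant, while $s=r^\alpha$ holds on a disc $D\subset X$.

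Next I would remove the branch by taking logarithmic derivatives. On $D$, differentiating $s=r^\alpha$ gives
\begin{equation*}
\frac{ds}{s}=\alpha\,\frac{dr}{r}.
\end{equation*}
Both sides are meromorphic $1$-forms on all of $X$, so by the identity theorem this equality holds globally on $X$. No single-valued power is needed for this step.

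Then I compare residues. Since $r$ is nonconstant on a compact surface, it has a zero or pole at some point $p$ of order $m\in\mathbb Z\setminus\{0\}$. The residue of $dr/r$ at $p$ is $m$, and the residue of $ds/s$ at $p$ is $\ord_p s\in\mathbb Z$. Equating residues gives $\ord_p s=\alpha m$, so $\alpha=\ord_p(s)/m\in\Q$, a contradiction. Hence $R$ is constant on $V$. Then $dS/S=\alpha\,dR/R=0$ on $U$, so $S$ is locally constant on a nonempty open subset. By irreducibility of $V$ (identity principle for rational functions), $S$ is constant.

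The main obstacle is the curve-selection step: it must give an irreducible curve meeting $U$ on which $R$ stays nonconstant, so that the global $1$-form identity holds on a compact surface. If $\dim V=1$, I would take $C=V$ directly. For higher dimension, I would use the fact that intersecting $V$ with general hyperplanes through a point where $dR\ne0$ gives an irreducible curve (Bertini) whose tangent line is not in $\ker dR$, so $R|_C$ is nonconstant. Alternatively, one can avoid curves entirely by noting that $dS/S-\alpha\,dR/R$ is a rational $1$-form vanishing on $U$, hence zero on $V$, and then taking residues along a prime divisor of $R$ on a normal model of $V$.
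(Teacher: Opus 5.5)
Your proposal is correct and follows the paper's proof essentially step for step. Both arguments take a generic curve section through a smooth point of the open set where $dR\neq0$, pass to the normalization of its projective closure, globalize $dS/S=\alpha\,dR/R$ by the identity theorem, and compare residues at a zero or pole of $R$ to force $\alpha\in\Q$.
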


\begin{proof}
If $R$ is nonconstant, choose a smooth point in the open subset where $dR\ne0$. A generic algebraic curve section through that point has nonconstant $R$. On the normalization of its projective closure, the restrictions are meromorphic and the local identity implies the global differential identity
\begin{equation}\label{eq:log-differential}
\frac{dS}{S}=\alpha\frac{dR}{R}.
\end{equation}
At a zero or pole of $R$ of nonzero integer order $m$, residues give $n=\alpha m$ for an integer $n$, a contradiction. Thus $R$ is constant, and so is $S$.
\end{proof}

\begin{lemma}[Finite-separable power rigidity]\label{lem:power-rigidity}
Let $A(z,w)$ and $B(z,w)$ be holomorphic near $(0,0)\in\C^2$ and admit finite separable expansions, that is, finite sums of a function of $z$ times a function of $w$. Suppose $A(0,0)=B(0,0)=1$. If $B=A^\alpha$ for the branch equal to one at the origin and $\alpha\notin\Q$, then
\begin{equation}\label{eq:rank-one}
A(z,w)=A(z,0)A(0,w).
\end{equation}
If additionally $A$ is symmetric and $A(t,t)=1$ for real $t$ near zero, then $A\equiv1$ near the origin.
\end{lemma}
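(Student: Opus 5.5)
We will reduce the statement to Lemma~\ref{lem:rational-power} by turning the finite separable expansions into rational functions on an algebraic variety.

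\textbf{Step 1: realize $A$ and $B$ as rational functions.} Write $A(z,w)=\sum_{i=1}^r a_i(z)\alpha_i(w)$ and $B(z,w)=\sum_{j=1}^s b_j(z)\beta_j(w)$ with $r,s$ minimal. Then the $z$-factors are linearly independent, and so are the $w$-factors. Choose points $w_1,\dots,w_r$ near $0$ with $[\alpha_i(w_k)]$ invertible. Then each $a_i(z)$ is a linear combination of the sections $A(z,w_k)$, and similarly for the other factors.

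Consider the holomorphic maps
\[
\Phi(z)=(a_i(z),b_j(z))_{i,j}\in\C^{r+s},\qquad \Psi(w)=(\alpha_i(w),\beta_j(w))_{i,j}\in\C^{r+s}.
\]
Let $V_1$ and $V_2$ be the Zariski closures of the images of small neighborhoods under $\Phi$ and $\Psi$. These are irreducible, being closures of images of connected analytic sets. On the irreducible variety $V=V_1\times V_2$ the bilinear polynomials
\[
\tilde A(u,v)=\sum_i u_iv_i,\qquad \tilde B(u,v)=\sum_j u_{r+j}v_{r+j}
\]
are regular functions. Near the point $(\Phi(0),\Psi(0))$ we have $\tilde B=\tilde A^\alpha$ on the image of $\Phi\times\Psi$.

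\textbf{Step 2: the main obstacle.} The identity is known only on this image, which need not be open in $V$. This is the step we expect to be hardest. We handle it by analytic continuation. The analytic function $\tilde B-\tilde A^\alpha$, defined near the base point on $V$ with the branch equal to one there, vanishes on the image of $\Phi\times\Psi$. That image is Zariski dense, and we claim it is also not contained in any proper analytic subgerm.

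If this density is delicate, a fallback avoids it. Apply the logarithmic-derivative identity
\[
\tilde A\,d\tilde B=\alpha\,\tilde B\,d\tilde A,
\]
a polynomial relation among regular $1$-forms. It holds on the image, so it holds on the Zariski closure $V$. Then integrate on an open subset of $V$ to recover $\tilde B=\tilde A^\alpha$ there. This is the ``logarithmic derivative extends the identity to an algebraic closure'' device announced in the introduction.

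\textbf{Step 3: apply the lemma to each factor.} Lemma~\ref{lem:rational-power} then forces $\tilde A$ and $\tilde B$ to be constant on $V$. This is too strong, and the correct conclusion comes from freezing one variable at a time.

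Fix a generic $v$ and apply the lemma on $V_1$ to the pair $\tilde A(\cdot,v)/\tilde A(u_0,v)$ and the corresponding normalized $\tilde B$. Since these are powers of each other, they are constant in $u$. Back on the original variables, this says that $A(z,w)/A(0,w)$ is independent of $z$, and hence equals $A(z,0)/A(0,0)=A(z,0)$.

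More precisely, the lemma applies to the ratio $R(z)=A(z,w)A(0,0)/(A(0,w)A(z,0))$, which is rational on $V_1\times V_1'$, together with the matching $B$-ratio, which equals $R^\alpha$. So $R$ is constant, and evaluating at $z=0$ gives $R\equiv1$. This proves~\eqref{eq:rank-one}.

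\textbf{Step 4: the symmetric case.} Suppose in addition that $A$ is symmetric and $A(t,t)=1$ for real $t$. Put $g(z)=A(z,0)$. Symmetry gives $A(0,w)=g(w)$, so $g(t)^2=1$ for real $t$ near $0$. Since $g(0)=1$, we get $g\equiv1$ by the identity theorem. Hence $A\equiv1$ near the origin.
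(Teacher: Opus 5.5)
There is a genuine gap at Step~2, and Step~3 inherits it. Lemma~\ref{lem:rational-power} needs the power identity on a nonempty \emph{open} subset of an irreducible variety. You only know it on the image of $\Phi\times\Psi$, and both of your extension arguments are false in general.

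Take $A=e^{z+w}$ and $B=e^{\alpha(z+w)}$, which satisfy every hypothesis. Then $\Phi(z)=(e^z,e^{\alpha z})$ has Zariski closure $\C^2$, so $V=\C^4$, $\tilde A=u_1v_1$ and $\tilde B=u_2v_2$.
\begin{itemize}
\item The image is Zariski dense, but it is itself a two-dimensional proper analytic subgerm.
\item The identity $u_2v_2=(u_1v_1)^\alpha$ holds on no open subset of $\C^4$.
\item The form $\tilde A\,d\tilde B-\alpha\tilde B\,d\tilde A$ has $du_2$-coefficient $u_1v_1v_2\neq0$. So your $1$-form relation does not pass to the Zariski closure: a pulled-back form vanishing on the image constrains only tangent vectors of the image, which is not a polynomial condition on points.
\end{itemize}
Your remark in Step~3 that the conclusion is ``too strong'' is exactly this contradiction. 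Lemma~\ref{lem:rational-power} would make $\tilde A$ constant.

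Step~3 has its own problems. The first ratio, $A(z,w)/A(0,w)$, is normalized in the wrong variable. It equals $e^z$ in the example, so it is not constant, and ``independent of $z$, hence equal to $A(z,0)$'' does not follow. The corrected ratio $R$ is the right object; for fixed $w$ it is already rational on $V_1$, and $V_1'$ is undefined. But $R_B=R^\alpha$ is still known only on the image curve of $\Phi$, which is not open in $V_1$ once $\dim V_1>1$.

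The missing idea is to differentiate in the \emph{frozen} variable $w$, which is a parameter and not a coordinate on the $z$-variety.
\begin{enumerate}
\item For points $(U,G)$ of $V_1$, put $a_w(U,G)=\tilde A\bigl((U,G),\Psi(w)\bigr)$ and $b_w(U,G)=\tilde B\bigl((U,G),\Psi(w)\bigr)$. Then $a_w(\Phi(z))=A(z,w)$ and $b_w(\Phi(z))=B(z,w)$.
\item Differentiating $B=A^\alpha$ in $w$ shows that, for each fixed $w$, the polynomial $(\partial_w b_w)\,a_w-\alpha\, b_w\,\partial_w a_w$ in $(U,G)$ vanishes on the image of $\Phi$. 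Hence it vanishes on all of $V_1$. This is the legitimate form of the logarithmic-derivative device.
\item At each point of $V_1$ near $\Phi(0)$, we have $a_w,b_w\neq0$ for small $w$. Integrating in $w$ from $0$ gives $b_w/b_0=(a_w/a_0)^\alpha$ on an open subset of $V_1$.
\item Both ratios are rational on $V_1$, so Lemma~\ref{lem:rational-power} makes them constant. Evaluating at $\Phi(z)$ and at $\Phi(0)$ gives $A(z,w)/A(z,0)=A(0,w)$, which is~\eqref{eq:rank-one}.
\end{enumerate}
This is the paper's proof. Your Step~4 agrees with the paper's treatment of the symmetric case.
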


\begin{proof}
Write $A(z,w)=u(z)\cdot v(w)$ and $B(z,w)=g(z)\cdot h(w)$ with finite holomorphic vectors; no positivity or bilinear normal form is needed. Let $V$ be the complex Zariski closure of $z\mapsto(u(z),g(z))$. It is irreducible, since evaluation of polynomials on a connected holomorphic disk has a prime kernel. For its coordinate vectors $(U,G)$, set
\[
a_w(U,G)=U\cdot v(w),\qquad b_w(U,G)=G\cdot h(w).
\]
Differentiating $B=A^\alpha$ in $w$ and clearing denominators gives
\begin{equation}\label{eq:polynomial-log-identity}
(\partial_wb_w)a_w-\alpha b_w(\partial_wa_w)=0.
\end{equation}
For each fixed $w$, this polynomial identity holds on the original image and therefore on $V$. Near the image of $z=0$, integrate its logarithmic derivative in $w$ to obtain
\begin{equation}\label{eq:ratio-identity}
\frac{b_w}{b_0}=\left(\frac{a_w}{a_0}\right)^\alpha.
\end{equation}
The factors are nonzero on a sufficiently small relative neighborhood and for small $w$; the branch equals one when $w=0$. For each fixed small $w$, both ratios are rational functions on $V$. Lemma~\ref{lem:rational-power} makes them constant. Evaluation on the original image and at $z=0$ gives~\eqref{eq:rank-one}.

By symmetry, $A(t,t)=A(t,0)^2=1$. Continuity and $A(0,0)=1$ give $A(t,0)=1$ near zero. The holomorphic identity theorem then gives $A\equiv1$.
\end{proof}

\begin{theorem}[Irrational polynomial-power rigidity]\label{thm:irrational-rank}
Let $f$ be continuous on $[-1,1]$. Suppose its germ at $1$ has the form~\eqref{eq:intro-germ}, with $P,Q\in\R[t]$, $P$ nonconstant, $P(1)\ne0$, $c\ne0$, and $\alpha\in\R\setminus\Q$. The quotient $P(t)/P(1)$ is positive near $1$, and its positive real power is intended. For every $S\subset S^{d-1}$,
\begin{equation}\label{eq:rank-finiteness}
\rank\bigl[f(x\cdot y)\bigr]_{x,y\in S}<\infty
\quad\Longleftrightarrow\quad |S|<\infty.
\end{equation}
No positive-semidefiniteness assumption is required.
\end{theorem}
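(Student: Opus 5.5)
The direction $|S|<\infty\Rightarrow$ finite rank is trivial: the rank is at most $|S|$. For the converse, I would argue by contradiction, using the rank-one rigidity of Lemma~\ref{lem:power-rigidity} along an analytic arc through an accumulation point.

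\emph{Localization.} Suppose $S$ is infinite and $\rank[f(x\cdot y)]_{S}=r<\infty$. By compactness of $S^{d-1}$, $S$ has an accumulation point $x_0$. Choose $\delta>0$ so small that \eqref{eq:intro-germ} holds for all $t=x\cdot y$ with $x,y\in B(x_0,\delta)$. Put $S'=S\cap B(x_0,\delta)$, which is still infinite with rank at most $r$. The kernel $Q(x\cdot y)$ is polynomial in $x,y$, so it has finite rank. Since $c\neq0$, the kernel
\[
B(x,y)=\Bigl(\frac{P(x\cdot y)}{P(1)}\Bigr)^{\alpha}
\]
therefore has finite rank $r'$ on $S'$. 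By \eqref{eq:anchor-representation} I would choose anchors $z_1,\dots,z_{r'}\in S'$ with
\[
B(x,y)=k(x)^TG^{-1}k(y)\qquad (x,y\in S'),\qquad k_i(x)=B(x,z_i).
\]
Every function here is real-analytic on the complexifiable neighborhood $B(x_0,\delta)$, since there $P(x\cdot y)/P(1)$ stays near $1$.

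\emph{Producing an arc.} The difference $D(x,y)=B(x,y)-k(x)^TG^{-1}k(y)$ is real-analytic on $B(x_0,\delta)^2$ and vanishes on $S'\times S'$. Let $Z\subset B(x_0,\delta)$ be the germ at $x_0$ of the real-analytic set cut out by all functions $y\mapsto D(x,y)$ with $x\in S'$. Then $Z$ contains the points of $S'$ accumulating at $x_0$, so $Z$ has positive dimension at $x_0$. The real-analytic curve selection lemma then gives a nonconstant analytic arc $\gamma:(-\epsilon,\epsilon)\to Z$ with $\gamma(0)=x_0$. Thus $D(x,\gamma(t))=0$ for $x\in S'$ and all $t$. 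Applying the same construction in the first variable, with $y=\gamma(t)$ fixed, and using symmetry of $D$, I obtain $D(\gamma(s),\gamma(t))\equiv0$. I expect this step to be the main obstacle. The delicate points are that the zero set must be taken in the right variable and that the arc must be genuinely nonconstant. If this direct approach gives trouble, I would instead fix one accumulating sequence and intersect zero sets iteratively, using Noetherianity of analytic germs.

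\emph{Applying rigidity.} Complexify $s,t$ to $z,w$ near $0$ and set
\[
A(z,w)=\frac{P(\gamma(z)\cdot\gamma(w))}{P(1)}.
\]
Then $A$ is a polynomial in $\gamma(z)\cdot\gamma(w)$, so it is finitely separable. The function $A^\alpha=k(\gamma(z))^TG^{-1}k(\gamma(w))$ is also finitely separable. Moreover $A$ is symmetric, $A(0,0)=1$, and $A(t,t)=1$ for real $t$ because $|\gamma(t)|=1$. Lemma~\ref{lem:power-rigidity} therefore gives $A\equiv1$, so $P(\gamma(0)\cdot\gamma(t))$ is constant. On the other hand,
\[
\gamma(0)\cdot\gamma(t)=1-\tfrac12|\gamma(t)-\gamma(0)|^2
\]
is a nonconstant analytic function, and a nonconstant polynomial composed with a nonconstant analytic function is nonconstant. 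This contradiction shows that $S$ is finite. No positivity was used anywhere, consistent with the statement.
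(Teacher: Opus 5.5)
\textbf{Verdict: genuine gap in the arc-selection step.} Your overall route is the paper's: localize at an accumulation point, subtract the polynomial kernel $Q(x\cdot y)$, form the anchor residual $D$, select an analytic arc, complexify, apply Lemma~\ref{lem:power-rigidity}, and contradict nonconstancy of $\gamma(0)\cdot\gamma(t)$. The localization, the separability checks and the final contradiction are fine. The gap is in the step you flagged as the main obstacle.

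\textbf{Why the arc step fails.} You choose $\gamma$ inside the common zero set $Z$ of the family $\{D(x,\cdot):x\in S'\}$. That gives $D(x,\gamma(t))=0$ for $x\in S'$. By symmetry, each function $D(\gamma(t),\cdot)$ then vanishes on $S'$. But nothing you have shown makes a function that merely vanishes on $S'$ also vanish on $Z$. So $D(\gamma(s),\gamma(t))=0$ does not follow.

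Membership $y\in Z$ only says that the column $B(\cdot,y)|_{S'}$ lies in the anchor span. It constrains neither $D(y,y)$ nor $D(y,y')$ for two points of $Z$. In general $Z$ can be far larger than the analytic closure of $S'$. For example, let $S'$ lie on the great circle $\{x_2=0\}$ and accumulate at $x_0$, and take the symmetric kernel $D(x,y)=x_2y_2$, which vanishes on $S'\times S'$. Every $D(x,\cdot)$ with $x\in S'$ is then identically zero, so $Z$ is the whole cap. The arc $\gamma(t)=(0,\sin t,\cos t,0,\ldots)$ is admissible but has $D(\gamma(s),\gamma(t))=\sin s\sin t\neq0$. ``Applying the same construction in the first variable'' would select a possibly different arc and say nothing about $\gamma\times\gamma$.

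\textbf{The repair (Lemma~\ref{lem:analytic-enlargement}).} Select the arc in the zero germ of the ideal $\mathcal I$ of \emph{all} convergent germs at $x_0$ that vanish on $S'$ near $x_0$. Include $|x|^2-1$ in $\mathcal I$, so the arc stays on the sphere. Noetherianity makes $\mathcal I$ finitely generated, so its zero set is a genuine analytic germ containing $S'$, and curve selection applies. The argument then runs in two rounds.
\begin{enumerate}
\item For $y\in S'$ the germ $D(\cdot,y)$ lies in $\mathcal I$, so $D(\gamma(t),y)=0$ for small $t$. That interval may depend on $y$; you must enlarge it to a fixed interval by the identity theorem in $t$.
\item Consequently each $D(\gamma(t),\cdot)$ vanishes on $S'$, hence lies in $\mathcal I$. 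Repeating the argument gives $D(\gamma(s),\gamma(t))=0$ on a fixed square.
\end{enumerate}
The defining property you need is that the arc lies in the zero set of every germ vanishing on $S'$, not just of the particular family $D(x,\cdot)$. Your fallback of intersecting zero sets iteratively is not specified enough to supply this property. Without it, the finite-rank identity on $\gamma\times\gamma$, which Lemma~\ref{lem:power-rigidity} needs, is not established.
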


\begin{proof}
Only the forward implication requires proof. Finite rank is preserved on the compact closure by continuity of all finite minors. If $S$ is infinite, work in a small cap about an accumulation point. Subtract the finite-rank polynomial kernel $Q(x\cdot y)$ and divide by $c$. The resulting finite-rank kernel is locally
\begin{equation}\label{eq:local-power-kernel}
\left(\frac{P(x\cdot y)}{P(1)}\right)^\alpha.
\end{equation}
It is real analytic in both ambient variables on a small neighborhood.

The analytic enlargement lemma in Appendix~\ref{app:analytic-enlargement} produces a nonconstant real-analytic sphere arc $x(t)$ on which the same finite-rank identity holds. Complexify it and set
\begin{equation}\label{eq:polynomial-base}
A(z,w)=\frac{P(x(z)\cdot x(w))}{P(1)},\qquad B(z,w)=A(z,w)^\alpha.
\end{equation}
The polynomial kernel $A$ has finite rank, and the enlargement gives finite rank for $B$. Both have diagonal one. Lemma~\ref{lem:power-rigidity} gives $P(x(s)\cdot x(t))=P(1)$. Since $P$ is nonconstant, the real solutions of $P(u)=P(1)$ are isolated; continuity near the diagonal forces $x(s)\cdot x(t)=1$. Real unit vectors with inner product one coincide, contradicting the nonconstant arc.
\end{proof}

The enlargement arc in this proof is \emph{real analytic}, not necessarily Nash. This distinction is deliberate. For irrational powers the feature functions need not be algebraic. The theorem proves an obstruction to finite rank, not finite rank of an arbitrary minimizing support.

\section{A ramification obstruction on Nash arcs}\label{sec:nash}

This section removes the finite-rank assumption but imposes an algebraic curve structure. We isolate the only scalar information about the potential that the argument needs.

\begin{definition}[Removable ramification orders]\label{def:ramification}
Let $F$ be a holomorphic germ at $1$, continuable along every path in $\C\setminus\Sigma$, where $\Sigma$ is finite and $1\notin\Sigma$. For $\zeta\in\Sigma$, let $\mathcal R_\zeta(F)$ consist of the positive integers $\ell$ for which some branch obtained from this continuation has
\begin{equation}\label{eq:ramified-germ}
w\longmapsto F(\zeta+w^\ell)
\end{equation}
extending holomorphically to $w=0$. Agreement on a sufficiently small sector is enough to define the branch in this statement. A logarithm, a pole, or an irrational power at $\zeta$ has no such removable order. A positive rational power with reduced denominator $b$ has precisely the positive multiples of $b$.
\end{definition}

The definition quantifies over all continuation branches. One must not select a singular branch if another reachable branch would become holomorphic at the same point.

\subsection{A moving zero with bounded multiplicity}\label{sec:moving-zero}

\begin{lemma}[Moving-zero lemma]\label{lem:moving-zero}
Let $x_1,\ldots,x_d$ be meromorphic on a connected compact Riemann surface $C$, real on a nonconstant real arc $x(t)$, and satisfy $\sum_i x_i^2=1$. For $\zeta\ne1$, put
\begin{equation}\label{eq:pencil}
\begin{gathered}
A_t(z)=\sum_i x_i(z)x_i(t),\qquad r_\zeta=\dim V_\zeta,\\
V_\zeta=\Span_\C\{A_t-\zeta:t\text{ in the real arc}\}.
\end{gathered}
\end{equation}
Outside any prescribed finite subset of $C$, there are a real parameter $t_0$ and a point $z_0$ where $A_{t_0}-\zeta$ has a zero of order
\begin{equation}\label{eq:small-zero}
1\le\ell\le r_\zeta-1.
\end{equation}
In particular $r_0\le d$, and $r_\zeta\le d+1$ when $\zeta\ne0$.
\end{lemma}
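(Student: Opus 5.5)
The plan is a Weierstrass-point (gap-sequence) argument for the finite-dimensional linear system $V_\zeta$ on $C$, combined with a check that the zeros of $A_t-\zeta$ genuinely move with $t$. The final dimension claims are immediate. Each $A_t-\zeta$ lies in $\Span_\C\{x_1,\dots,x_d,1\}$, so $r_\zeta\le d+1$. When $\zeta=0$ each $A_t$ lies in $\Span_\C\{x_1,\dots,x_d\}$, so $r_0\le d$. Also $1\le r_\zeta<\infty$, because $A_t(t)-\zeta=1-\zeta\ne0$.

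\emph{Step 1 (generic order sequence).} Fix a basis $g_1,\dots,g_r$ of $V_\zeta$, where $r=r_\zeta$. Their Wronskian with respect to a local coordinate is a nonzero meromorphic object, since the $g_j$ are linearly independent. Let $E$ be the union of four finite sets:
\begin{itemize}
\item the prescribed finite set;
\item the poles of the $x_i$;
\item the zeros and poles of the Wronskian;
\item the common zeros of all elements of $V_\zeta$.
\end{itemize}
The last set is finite because $V_\zeta$ contains nonzero functions. At every $z_0\notin E$ the standard Wronskian argument shows that the vanishing orders at $z_0$ of nonzero elements of $V_\zeta$ are exactly $0,1,\dots,r-1$. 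Hence any element of $V_\zeta$ vanishing at $z_0$ does so to order $\ell$ with $1\le\ell\le r-1$. It therefore suffices to find a real parameter $t_0$ and a point $z_0\notin E$ with $A_{t_0}(z_0)=\zeta$.

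\emph{Step 2 (a moving zero exists; main obstacle).} Suppose, for contradiction, that for every $t$ in the arc all zeros of $A_t-\zeta$ lie in $E$.

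First, $A_t$ is a nonconstant meromorphic function for all but possibly isolated $t$. Otherwise $A_t\equiv A_t(t)=1$ on an interval of $t$, so $x(s)\cdot x(t)=1$ there, which forces the arc to be constant. A nonconstant function takes the value $\zeta$, so the zeros in question exist.

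Next I will show that the zeros of $A_t-\zeta$ are common zeros. For each $e\in E$, the function $t\mapsto A_t(e)-\zeta$ is real analytic. It is therefore either identically zero, in which case $e$ is a common zero of $V_\zeta$, or it has isolated zeros. By pigeonhole, on a compact subarc, for all but finitely many $t$ every zero of $A_t-\zeta$ lies among the common zeros.

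The pole orders of $A_t$ are bounded by those of the $x_i$. Consequently the divisors of $A_t-\zeta$ range over a finite set. Two functions with the same divisor differ by a constant, so there are infinitely many $t$ with $A_t-\zeta=c(t)\,g$ for a single $g$. Analytic continuation in $t$ extends this to the whole arc, which gives $r=1$.

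Symmetry $A_t(s)=A_s(t)$ then yields $c(t)g(s)=c(s)g(t)$, hence $c=\lambda g$ on the arc and $A_t(z)=\zeta+\lambda g(t)g(z)$. Setting $z=t$ gives $\lambda g(t)^2=1-\zeta$, so $g$ is constant on the arc. But then $A_t\equiv1$, contradicting the nonconstancy established above. So some $t_0$ has a zero $z_0\notin E$ of $A_{t_0}-\zeta$, and Step 1 gives $1\le\ell\le r_\zeta-1$.

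I expect Step 2 to be the delicate part. The two points needing care there are that the finitely many possible divisors really force proportionality on an interval of parameters, and that the analytic continuation in $t$ is justified.
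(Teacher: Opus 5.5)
Your proposal is correct and follows essentially the same route as the paper's proof: the Wronskian bounds vanishing orders off a finite exceptional set, the contradiction hypothesis leaves only finitely many possible divisors, so $A_t-\zeta$ spans a single line, and symmetry plus the diagonal value $1-\zeta$ then force $A_t\equiv1$. The differences are cosmetic. The paper gets constancy of $[A_t-\zeta]$ directly from continuity of $t\mapsto[A_t-\zeta]$ on the connected arc, where you use pigeonhole plus analytic continuation in $t$. Your reduction to common zeros of $V_\zeta$ is unnecessary, since the divisor count already follows from all zeros lying in $E$; at poles of the $x_i$ it would also need Laurent coefficients rather than the literal function $t\mapsto A_t(e)$.
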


\begin{proof}
All members of $V_\zeta$ have pole divisors bounded by one fixed effective divisor. A basis has a nonzero meromorphic Wronskian: in characteristic zero, vanishing of the Wronskian of one-variable meromorphic functions is equivalent to linear dependence over the constants. This follows by induction after division by the first basis element. Outside a finite set containing the poles and Wronskian exceptions, a nonzero member of $V_\zeta$ cannot have a zero of order $r_\zeta$ or greater, since its first $r_\zeta$ jets would vanish.

Suppose every zero of every $A_t-\zeta$ lies in that finite exceptional set, enlarged by the prescribed forbidden points. Uniform pole bounds also bound the total zero order. There are therefore only finitely many possible divisors. Two meromorphic functions on a compact connected surface with the same divisor differ by a constant. Hence the continuous family of projective classes $[A_t-\zeta]$ is constant.

This is impossible. Write $C(s,t)=A_t(s)-\zeta$. Its diagonal is $1-\zeta\ne0$. A constant projective pencil and symmetry give, for a fixed real $t_*$,
\begin{equation}\label{eq:constant-pencil}
C(s,t)=\frac{C(s,t_*)C(t_*,t)}{1-\zeta}.
\end{equation}
The diagonal identity implies $C(t,t_*)^2=(1-\zeta)^2$. On a connected small interval $C(t,t_*)=1-\zeta$, so $A_t(s)=1$ on the real arc. It would be constant as a real unit-vector arc, a contradiction. Thus a zero occurs outside the exceptional set, and the Wronskian gives~\eqref{eq:small-zero}.
\end{proof}

The compactification is essential: zeros confined to a finite set do not force finitely many projective classes without a common pole bound on a compact surface.

\subsection{The Hilbert-norm continuation step}

\begin{lemma}[Local positivity and continuation]\label{lem:hilbert-continuation}
The following facts hold.
\begin{enumerate}
\item If a real-analytic symmetric kernel $K(s,t)$ is positive semidefinite on an infinite real parameter set accumulating at zero, all finite blocks of its Taylor coefficient matrix at $(0,0)$ are positive semidefinite. Its polarization consequently has a local holomorphic Hilbert-space feature map.
\item Let $\Phi$ be a real-valued real-analytic function on a connected Riemann surface $\Omega$. If locally on one nonempty disk $\Phi=\|F\|^2$ for a holomorphic Hilbert-valued map, then such representations exist locally at every point. The feature map can be continued along any fixed path after constant unitary identifications on overlaps.
\end{enumerate}
No global single-valued feature map is asserted.
\end{lemma}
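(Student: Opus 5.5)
For part (1) I would argue directly with Taylor coefficients. Write $K(s,t)=\sum_{m,n}k_{mn}s^mt^n$ near $(0,0)$, with $k_{mn}=k_{nm}$, and fix $N$. The parameter set $T$ accumulates at zero, so I can choose distinct points $t_1,\ldots,t_M\in T$ tending to $0$. Fix $c\in\R^{N+1}$. I would form the divided-difference combinations of the point evaluations along a sequence of $(N+1)$-tuples $\tau_0,\ldots,\tau_N$ that shrink to zero. These combinations approximate the functionals $\phi\mapsto\phi^{(j)}(0)/j!$. Pair them with real coefficients $c_j$ and apply positivity of every finite matrix $[K(\tau_i,\tau_{i'})]$. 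The result is that
\[
\sum_{j,j'\le N}c_jc_{j'}\,\Delta_j\Delta'_{j'}K\ge0,
\]
where $\Delta_j$ and $\Delta'_{j'}$ are the divided-difference operators acting in $s$ and $t$ respectively. Real analyticity makes $K$ jointly smooth, so these divided differences converge to $\partial_s^j\partial_t^{j'}K(0,0)/(j!\,j'!)=k_{jj'}$. Hence every block $[k_{jj'}]_{j,j'\le N}$ is positive semidefinite. The only delicacy is that the nodes $\tau_i$ come from $T$ rather than an arbitrary interval, but divided differences need only distinct nodes converging to $0$, so this is harmless.

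The Hilbert feature map then comes from an infinite Gram factorization. Because $K$ converges on a bidisc of radius $\rho$, Cauchy estimates give $|k_{mn}|\le C\rho^{-m-n}$. Positivity of the coefficient matrix gives a Hilbert space $\mathcal H$ with vectors $e_m$ satisfying $\langle e_m,e_n\rangle=k_{mn}$; I would take $\mathcal H$ to be the completion of finitely supported sequences modulo null vectors under the form $(k_{mn})$. Next I would set $F(z)=\sum_m e_mz^m$. Since $\|e_m\|^2=k_{mm}\le C\rho^{-2m}$, this series converges for $|z|<\rho$ and is holomorphic there. Polarizing, $\langle F(z),F(\bar w)\rangle=\sum k_{mn}z^m w^n=K(z,w)$, using the real bilinear pairing extended complex-linearly in the first slot and antilinearly in the second. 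This is the required local holomorphic feature map.

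For part (2) I would use a connectedness argument: let $\mathcal G\subset\Omega$ be the set of points having a neighbourhood disk on which $\Phi=\|F\|^2$ for some holomorphic Hilbert-valued $F$. This set is open by definition and nonempty by hypothesis. To show it is closed, take $p\in\overline{\mathcal G}$ and a coordinate disk $D$ around $p$ on which $\Phi$ has a convergent polarization $\tilde\Phi(z,w)$, holomorphic in $(z,\bar w)$. Pick $q\in D\cap\mathcal G$. Near $q$, the identity $\tilde\Phi(z,w)=\langle F(z),F(w)\rangle$ holds, so the Taylor coefficient matrix of $\tilde\Phi$ at $q$ is positive semidefinite. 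Part (1), applied in the coordinate centred at $q$, then gives a feature map on the largest disk about $q$ inside the convergence domain of $\tilde\Phi$. That disk contains $p$ once $q$ is close enough to $p$, because the polarized series converges on a uniform polydisc over a neighbourhood of $p$. So $p\in\mathcal G$, and since $\Omega$ is connected, $\mathcal G=\Omega$.

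I expect the main obstacle to be the continuation statement along a path. On overlapping disks, two holomorphic maps $F_1,F_2$ with $\langle F_1(z),F_1(w)\rangle=\langle F_2(z),F_2(w)\rangle$ have Gram-equal spans. Calabi-type rigidity then gives a partial isometry $U$ carrying $\overline{\Span}F_1$ onto $\overline{\Span}F_2$ with $UF_1=F_2$. I would extend $U$ to a unitary by enlarging $\mathcal H$ with an infinite-dimensional summand so the orthogonal complements match. Covering the path by finitely many disks and composing these unitaries then continues the feature map along the path. Different paths may give different results, which is exactly why no global single-valued feature map is asserted.
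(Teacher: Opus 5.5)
Your proposal is correct and follows the paper's proof essentially step for step. Part (1) uses the divided-difference congruence, Cauchy bounds and the Gram series $F(z)=\sum_m e_mz^m$; part (2) uses an open--closed connectedness argument followed by overlap isometries composed along a disk cover of the path. The one variation is minor: you enlarge the Hilbert space by an auxiliary summand to extend each partial isometry to a unitary, whereas the paper replaces each target by the closed span of its image and uses the identity theorem to see that this span is already generated over any open subdisk, so the overlap isometry is unitary with no enlargement.
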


\begin{proof}
For the first assertion, choose $n+1$ distinct nodes in the given set approaching zero. Apply the invertible Newton divided-difference transform to rows and columns of their Gram matrix. Its $(i,j)$ entry tends to
\begin{equation}\label{eq:coefficient-matrix}
c_{ij}=\frac{\partial_s^i\partial_t^jK(0,0)}{i!j!}.
\end{equation}
Congruence and passage to the limit preserve positivity. Real analyticity gives a Cauchy bound $|c_{ij}|\le M\rho^{-i-j}$ on a smaller bidisk. Choose Hilbert vectors $v_i$ with $v_i\cdot v_j=c_{ij}$. The series $F(z)=\sum_i v_i z^i$ converges in Hilbert norm for $|z|<\rho$ and supplies the feature map.

For the second assertion, expand $\Phi(z,\bar z)$ about each point. The same Gram-series argument says that a local squared-norm representation is equivalent to positivity of every finite coefficient block. This property is closed, because the coefficients vary continuously with the center in a fixed coordinate chart. It is open, because an existing representation remains a representation nearby. Connectedness therefore propagates it to all of $\Omega$.

On a connected overlap, equality of squared norms gives equality of the polarized kernels. The correspondence between feature vectors extends to an isometry of their closed spans. Replace each target by the closed span of its image; the span over any open subdisk is already the whole target, by the scalar holomorphic identity theorem. The overlap isometry is thus unitary. Successive identifications along a finite disk cover of a path continue the initial map. There may be unitary monodromy around loops, which is harmless here.
\end{proof}

\begin{theorem}[Nash-arc ramification obstruction]\label{thm:nash-obstruction}
Let $f:[-1,1]\to\R$ be continuous and agree near $1$ with a real holomorphic germ $F$. Suppose $F$ is continuable along every path in $\C\setminus\Sigma$, where $\Sigma$ is finite and disjoint from $[1,\infty)$, and use its real-analytic continuation along that interval. Suppose some $\zeta\in\Sigma$ satisfies
\begin{equation}\label{eq:bad-ramification}
\mathcal R_\zeta(F)\cap\{1,\ldots,D_\zeta\}=\varnothing,
\qquad
D_\zeta=
\begin{cases}
d-1,&\zeta=0,\\
d,&\zeta\ne0.
\end{cases}
\end{equation}
For every nonconstant Nash sphere arc $\gamma$, every real constant $c$, and every infinite parameter set accumulating at an interior parameter, the kernel
\begin{equation}\label{eq:shifted-arc-kernel}
f(\gamma(s)\cdot\gamma(t))-c
\end{equation}
is not positive semidefinite on that parameter set. For a specified arc, $D_\zeta$ may be replaced by $r_\zeta-1$ from Lemma~\ref{lem:moving-zero}.

The criterion is unchanged by adding any entire function $H$ real on $\R$ to the germ $F$. It is also unchanged by multiplying $F$ by a nonzero real constant.
\end{theorem}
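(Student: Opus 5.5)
We argue by contradiction: suppose the kernel \eqref{eq:shifted-arc-kernel} is positive semidefinite on an infinite parameter set accumulating at an interior parameter $t_*$.

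\emph{Step 1: a local holomorphic feature map.} Since $\gamma$ is a Nash arc, after reparametrizing near $t_*$ its coordinates extend to meromorphic functions $x_1,\ldots,x_d$ on the normalization $C$ of the projective closure of the complexified curve. These functions are real on the arc and satisfy $\sum_i x_i^2=1$. The scalar $K(s,t)=F(A_t(s))-c$, with $A_t(s)=\sum_i x_i(s)x_i(t)$, is real analytic near $(t_*,t_*)$, because the diagonal value is $1$ and $F$ is holomorphic there. Lemma~\ref{lem:hilbert-continuation}(1) therefore yields a local holomorphic Hilbert map $\Psi$ with
\[
F(A_t(s))-c=\langle \Psi(s),\overline{\Psi(t)}\rangle .
\]

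\emph{Step 2: continuation to the moving zero.} Fix a real parameter $t_0$ near $t_*$. The function $z\mapsto F(A_{t_0}(z))-c$ is holomorphic in $z$ and equals the pairing of $\Psi(z)$ with the fixed vector $\overline{\Psi(t_0)}$. We continue $\Psi$ along paths in $C$ using the squared-norm formulation of Lemma~\ref{lem:hilbert-continuation}(2), applied on the diagonal to $\Phi(z)=F(A(z,\bar z^*))-c$. The continuation is legitimate along any path avoiding the finitely many points where $A(z,\bar z^*)$, the antiholomorphic reflection, meets $\Sigma$ or a pole. The delicate point is to continue the two-variable polarized kernel, with one slot frozen at the real point $t_0$, along a path from $t_*$ to a point $z_0$.

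Now apply Lemma~\ref{lem:moving-zero} with the prescribed forbidden set consisting of the poles, the points over $\Sigma\setminus\{\zeta\}$ with high contact, and the branch loci. This gives a real $t_0$ and a point $z_0$ at which $A_{t_0}-\zeta$ vanishes to order $\ell$ with $1\le \ell\le r_\zeta-1\le D_\zeta$. In a local coordinate $w$ at $z_0$ we have $A_{t_0}=\zeta+u(w)w^\ell$ with $u(0)\neq0$, and after a holomorphic change of coordinate $A_{t_0}=\zeta+\tilde w^\ell$.

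Along the continued path, $F(A_{t_0}(z))-c$ is a continuation branch of $F$ composed with $A_{t_0}$. It equals the pairing of the holomorphic vector $\Psi(z)$, continued up to unitary monodromy, with a fixed vector. Hence it is single-valued and holomorphic on a punctured disk around $z_0$, and it is locally bounded, since $\Psi$ is holomorphic at $z_0$ after continuation.

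\emph{Step 3: contradiction.} In the coordinate $\tilde w$, the holomorphic function $\tilde w\mapsto F(\zeta+\tilde w^\ell)$ extends across $\tilde w=0$ for some branch of $F$. This places $\ell\in\mathcal R_\zeta(F)$ with $\ell\le D_\zeta$, contradicting \eqref{eq:bad-ramification}. For a specified arc, the same argument run with $r_\zeta-1$ gives the refined statement.

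\emph{Invariance.} Adding an entire $H$ that is real on $\R$ changes neither $\Sigma$ nor the holomorphic extendability of $F(\zeta+w^\ell)$, so $\mathcal R_\zeta(F)$ is unchanged. Multiplying $F$ by a nonzero real constant also leaves $\mathcal R_\zeta(F)$ unchanged. Since the criterion depends only on $\mathcal R_\zeta$, both claims follow.

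\emph{Main obstacle.} The hard step is Step 2. Lemma~\ref{lem:hilbert-continuation}(2) continues a feature map for a diagonal squared norm, whereas we need holomorphic continuation in $z$ with the other slot fixed at the real parameter $t_0$. The first approach to try is to work on $C\times C^*$, where $C^*$ is the conjugate surface, and continue along paths of the form $(z,\bar z)$. The frozen slot would then be recovered by polarization, using the identity theorem on the real arc. One must also check that the continuation path can be chosen to avoid every point where $A(z,\bar z)\in\Sigma$. These points form a real-analytic set of real codimension at least one, but they may disconnect $C$. If they do, I would instead use the freedom in choosing $t_0$ and $z_0$ from Lemma~\ref{lem:moving-zero}, which produces moving zeros outside any finite set, so that a zero is reached inside the connected component containing the arc.
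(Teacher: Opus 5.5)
\textbf{Gap in Step 2.} The step you flag as the main obstacle is left unresolved. The missing idea is that the diagonal of the polarized kernel never meets $\Sigma$. Away from the poles,
\[
\sum_i x_i(z)\overline{x_i(z)}=\sum_i|x_i(z)|^2\ \ge\ \Bigl|\sum_i x_i(z)^2\Bigr|=1 ,
\]
so the diagonal values always lie in $[1,\infty)$. By hypothesis, $F$ has a single-valued real-analytic continuation there. Hence $\Phi(z)=F\bigl(\sum_i|x_i(z)|^2\bigr)-c$ is a globally defined real-analytic function on the connected surface $\Omega=C\setminus\{\text{poles}\}$. It agrees with $\|\Psi\|^2$ near the arc, so Lemma~\ref{lem:hilbert-continuation}(2) continues $\Psi$ along \emph{every} path in $\Omega$.

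So the ``real-analytic set where the diagonal meets $\Sigma$'' that you fear might disconnect $C$ is in fact empty. This is exactly where the hypothesis $\Sigma\cap[1,\infty)=\varnothing$ enters, and your proposal never uses it. That is a warning sign, since Section~\ref{sec:limits} shows the conclusion fails without it (the kernel $1/(a-t)$). Your fallback of placing the moving zero in the component containing the arc is also not available. Lemma~\ref{lem:moving-zero} only lets you avoid a prescribed \emph{finite} set; it does not put $z_0$ in a prescribed open region.

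\textbf{How to finish once $\Psi$ continues.} Your endgame is right, but the bookkeeping should change.
\begin{enumerate}
\item Apply Lemma~\ref{lem:moving-zero} with only the poles forbidden. You cannot prescribe the points of $A_{t_0}^{-1}(\Sigma)$ in advance, because $t_0$ is an output of the lemma.
\item Note that $A_{t_0}$ is nonconstant for real $t_0$; otherwise it would be identically one and the arc would be constant. Hence $A_{t_0}^{-1}(\Sigma)$ is finite.
\item Choose the path from $t_0$ to $z_0$ inside $\Omega$, avoiding this finite set until its endpoint.
\end{enumerate}
Now pair the continued $\Psi$ with the unitarily transported vector coming from $\Psi(t_0)$. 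This pairing is holomorphic through $z_0$. By uniqueness of continuation along the path, it equals a continuation branch of $F\circ A_{t_0}-c$, which yields $\ell\in\mathcal R_\zeta(F)$.

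\textbf{Minor point.} To transfer $\Psi$ from the parameter disk to a disk of $C$, first move to a regular real parameter whose image is a nonsingular point of the curve. Only then is the parametrization locally invertible.
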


\begin{proof}
Assume positivity, translate the accumulating parameter to zero, and shrink so all real pairwise correlations lie in the prescribed germ neighborhood. Lemma~\ref{lem:hilbert-continuation} supplies a local Hilbert feature map on a whole parameter disk. Complexify the Nash arc, take its irreducible algebraic curve closure, and normalize its projective compactification. If the initial parametrization is critical, move within this disk to a regular real parameter with nonsingular image. Such parameters exist because the map is nonconstant and the algebraic curve has finitely many singular points. A local inverse then transfers the feature map to a disk of the normalization. Its coordinate functions $x_i$ are meromorphic on a compact connected Riemann surface $C$, and
\begin{equation}\label{eq:complex-sphere}
\sum_i x_i(z)^2=1.
\end{equation}
Let $\Omega$ be $C$ minus their finite pole set. It is connected. At every point of $\Omega$,
\begin{equation}\label{eq:norm-range}
q(z):=\sum_i|x_i(z)|^2\ge\left|\sum_i x_i(z)^2\right|=1.
\end{equation}
Consequently
\begin{equation}\label{eq:global-real-analytic-norm}
\Phi(z)=F(q(z))-c
\end{equation}
is a globally defined real-analytic real-valued function on $\Omega$. Near the initial arc it is the squared norm of the local feature map. Lemma~\ref{lem:hilbert-continuation} continues that map along any prescribed path in $\Omega$.

By Lemma~\ref{lem:moving-zero}, choose a real $t_0$ in the initial small arc and a point $z_0\in\Omega$ where $A(z)=\sum_i x_i(z)x_i(t_0)$ satisfies $A(z_0)=\zeta$ with order $1\le\ell\le r_\zeta-1$. Every $A$ for a real base point is nonconstant: if it were constant, evaluation at that base point would make it identically one and force the real unit-vector arc to be constant. Thus the preimages of the finite set $\Sigma$ are finite. Choose a path from the base point $t_0$ to $z_0$ in $\Omega$ avoiding these preimages until its endpoint.

Continue the initial feature map $\mathcal F$ along this path, and pair it with the fixed initial vector $\mathcal F(t_0)$. The resulting scalar function is holomorphic through $z_0$ and initially equals $F(A(z))-c$. Uniqueness of analytic continuation identifies it along the path with the corresponding branch of this scalar expression. Adding $c$ therefore makes a branch of $F(A(z))$ holomorphic at $z_0$.

Locally $A(z)-\zeta=(z-z_0)^\ell a(z)$ with $a(z_0)\ne0$. Taking a holomorphic $\ell$th root of $a$ gives a local coordinate $w$ for which $A(z)=\zeta+w^\ell$. We have produced an order $\ell\in\mathcal R_\zeta(F)$, contradicting~\eqref{eq:bad-ramification}.

Finally, an entire addition is holomorphic after every such local substitution and cannot change removability. It preserves real analyticity along $[1,\infty)$. A nonzero real scalar multiplication likewise does not change removability. The same proof applies.
\end{proof}

The continuation through $z_0$ comes from the Hilbert feature map, not from presuming that the singular scalar expression extends there. This is the branch issue that the argument must resolve.

\section{Discreteness on curves and for semialgebraic supports}\label{sec:curve-consequences}

Throughout this section, $f$ satisfies Theorem~\ref{thm:nash-obstruction}. No sign condition on $f$ or its minimum is imposed.

\begin{corollary}[Semialgebraic supports]\label{cor:semialgebraic-support}
On any nonempty compact admissible set $Y\subset S^{d-1}$, every minimizing measure whose support is semialgebraic has finite support. More generally, a minimizing support cannot contain infinitely many points of one nonconstant Nash arc with parameter accumulation at an interior point.
\end{corollary}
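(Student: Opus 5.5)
The plan is to combine the centered support positivity of Lemma~\ref{lem:support} with the Nash-arc obstruction of Theorem~\ref{thm:nash-obstruction}, taking $c=E$ for the minimal energy. We prove the ``more generally'' assertion first and then derive the semialgebraic statement from it.

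For the general assertion, let $\mu$ be a minimizer with energy $E$ and support $S$. Suppose $\gamma$ is a nonconstant Nash sphere arc and $T$ is an infinite parameter set accumulating at an interior parameter, with $\gamma(T)\subset S$. Lemma~\ref{lem:support} shows that $f(x\cdot y)-E$ is positive semidefinite on $S$. This uses only the centered inequality~\eqref{eq:support-positivity}, so no sign condition on $E$ is needed. Restricting to the points $\gamma(t)$ with $t\in T$ shows that the kernel~\eqref{eq:shifted-arc-kernel} with $c=E$ is positive semidefinite on $T$. That contradicts Theorem~\ref{thm:nash-obstruction}.

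One point needs care. The points $\gamma(t)$ need not be distinct, and positivity on the parameter set is what is asserted. A repeated point only repeats a row and column, which keeps a Gram matrix positive semidefinite. So this causes no problem, and in any case $\gamma$ is locally injective near generic parameters.

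For semialgebraic supports, suppose $S$ is semialgebraic and infinite. A semialgebraic set has finitely many points in dimension zero, so $\dim S\ge1$. By the semialgebraic cell decomposition, or by Nash curve selection~\cite[Proposition 8.1.13]{BCR98}, $S$ contains the image of a nonconstant Nash arc $\gamma:(-1,1)\to S$. Concretely, some cell of dimension at least one is Nash-diffeomorphic to an open cube, and we take a coordinate segment in it. Then every parameter in $(-1,1)$ gives a point of $S$, and in particular the parameters $1/n$ for large $n$ accumulate at the interior parameter $0$. The first part then gives a contradiction, so $S$ is finite.

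The main obstacle is making sure the arc produced really is Nash and lies in $S$ near an interior parameter, as Theorem~\ref{thm:nash-obstruction} requires. Curve selection in its usual form gives an arc starting at a boundary point, defined on $[0,1)$. To avoid this, we take the arc inside an open cell of positive dimension. Nash cell decomposition~\cite{BCR98} gives Nash coordinate charts there, so the arc lies in $S$ on a full open interval around $0$.
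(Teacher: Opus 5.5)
Your proposal is correct and matches the paper's proof. Lemma~\ref{lem:support} makes $f(x\cdot y)-e_Y(f)$ positive semidefinite on the support, which contradicts Theorem~\ref{thm:nash-obstruction} along an interior-accumulating Nash arc, and an infinite semialgebraic support contains such an arc by Nash cell decomposition or curve selection. Your remarks on repeated points and on placing the arc inside an open Nash cell only make explicit details the paper leaves implicit. Even the usual curve-selection arc on $[0,1)$ would suffice, since any parameter in $(0,1)$ can serve as the interior accumulation point.
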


\begin{proof}
Lemma~\ref{lem:support} makes $f(x\cdot y)-e_Y(f)$ positive semidefinite on the support. This contradicts Theorem~\ref{thm:nash-obstruction} in the stated accumulating-arc situation. An infinite semialgebraic set contains a nonconstant Nash arc by Nash curve selection or semialgebraic cell decomposition, so it cannot be such a support.
\end{proof}

A stronger unconditional conclusion is available when the admissible set itself is a curve. Its minimizing support may be a priori highly nonsemialgebraic.

\begin{lemma}[A finite confluent-Gram obstruction]\label{lem:jet-cap}
Let $K_c(s,t)=K(s,t)-c$ be real analytic near a diagonal point $(t_0,t_0)$. If for some $n$ the matrix
\begin{equation}\label{eq:jet-matrix}
C_n(t_0,c)=\left[
\frac{\partial_s^i\partial_t^jK_c(t_0,t_0)}{i!j!}
\right]_{0\le i,j\le n}
\end{equation}
has a negative quadratic direction, there is an interval about $t_0$ on which every positive-semidefinite restriction of $K_c$ contains at most $n$ distinct parameter points. The conclusion remains valid for $c'$ in a sufficiently small neighborhood of $c$.
\end{lemma}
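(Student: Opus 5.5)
The plan is to run the divided-difference argument from part (1) of Lemma~\ref{lem:hilbert-continuation} quantitatively. Let $v\in\R^{n+1}$ satisfy $v^TC_n(t_0,c)v<0$. Any positive-semidefinite restriction containing $n+1$ distinct parameters $s_0<\cdots<s_n$ in a small interval $J$ about $t_0$ has a positive-semidefinite Gram matrix $[K_c(s_i,s_j)]$. Apply the invertible Newton divided-difference transform $D$ (lower triangular, depending only on the nodes) to rows and columns. Congruence shows that the matrix $M(s_0,\ldots,s_n)=D[K_c(s_i,s_j)]D^T$ is positive semidefinite. Its $(i,j)$ entry is the two-variable divided difference $K_c[s_0,\ldots,s_i;\,s_0,\ldots,s_j]$.

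The key step is uniformity. By the Hermite--Genocchi formula, applied in each variable, this divided difference equals an average of $\partial_s^i\partial_t^jK_c/(i!j!)$ over points of $[\min s,\max s]^2$. For real-analytic $K_c$ on a closed square $[t_0-\delta,t_0+\delta]^2$, these averages are within $\omega(\delta)$ of the entries of $C_n(t_0,c)$, where $\omega(\delta)\to0$ is a modulus of continuity of the finitely many derivatives of order at most $2n$. The bound is uniform over all choices of distinct nodes, including nodes that collide. Hence $v^TMv\le v^TC_nv+\|v\|_1^2\,\omega(\delta)<0$ once $\delta$ is small. This contradicts positivity, so at most $n$ distinct parameters of $J=(t_0-\delta,t_0+\delta)$ can lie in any positive-semidefinite restriction.

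For stability in $c$, note that replacing $c$ by $c'$ changes only the $(0,0)$ entry of the divided-difference matrix, by $c-c'$. The constant's higher derivatives vanish, and its zeroth divided difference is the constant itself. Thus $v^TMv$ changes by $(c-c')v_0^2$, and the same strict inequality holds for $|c'-c|$ small, with the same interval $J$.

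The main obstacle is the uniform control of the divided differences as the nodes vary and possibly coalesce. I would handle it with the integral (Hermite--Genocchi) representation rather than with limits along sequences. The representation gives a bound depending only on $\delta$, which is exactly what turns the pointwise limit from Lemma~\ref{lem:hilbert-continuation} into a statement valid on a fixed interval.
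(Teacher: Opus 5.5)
Your proposal is correct and follows essentially the same route as the paper. Both arguments use congruence by the Newton divided-difference matrix, apply Hermite--Genocchi in both variables to get convergence to $C_n(t_0,c)$ uniformly in the node configuration, and conclude that the fixed negative direction persists; your explicit bound $\|v\|_1^2\,\omega(\delta)$ and your remark that changing $c$ only shifts the $(0,0)$ entry by $c-c'$ just make quantitative what the paper states in words.
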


\begin{proof}
For any $n+1$ distinct nearby nodes, transform their Gram matrix by the invertible Newton divided-difference matrix on both sides. The entries are mixed divided differences. As all nodes tend to $t_0$, they converge to~\eqref{eq:jet-matrix}, uniformly without any lower bound on their separation. The Hermite--Genocchi formula expresses an $i$th divided difference as the simplex integral of the $i$th derivative, with simplex volume $1/i!$; apply it in both variables~\cite{dB05}. Continuity of the mixed derivatives gives the asserted uniformity. A fixed negative direction therefore stays negative for all sufficiently nearby node tuples and shifts $c'$. Congruence preserves positivity, so those node tuples cannot support a positive-semidefinite Gram matrix.
\end{proof}

\begin{lemma}[Closed Nash parametrizations]\label{lem:closed-charts}
A compact semialgebraic set $Y$ of dimension at most one in the sphere is covered by finitely many isolated points and images $\gamma_j([0,1])\subset Y$, where each $\gamma_j$ is nonconstant and Nash on an open neighborhood of $[0,1]$, with values in the sphere there.
\end{lemma}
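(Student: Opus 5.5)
I will use the standard structure theory of semialgebraic sets in dimension at most one, namely semialgebraic cell decomposition together with the semialgebraic triangulation theorem~\cite{BCR98}. I then make the pieces Nash on a neighborhood of a closed interval by shrinking and reparametrizing.

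First, the sphere is a real algebraic set and $Y$ is compact semialgebraic of dimension $\le1$. A cylindrical cell decomposition of $\R^d$ adapted to $Y$, or equivalently a semialgebraic triangulation of $Y$, writes $Y$ as a finite disjoint union of points and open one-dimensional cells. Each one-cell is the image of a semialgebraic homeomorphism $\phi:(0,1)\to Y$ that is Nash on $(0,1)$. The zero-cells give the isolated points, together with finitely many further points. Compactness of $Y$ ensures that each $\phi$ extends continuously to $[0,1]$ with endpoints in $Y$. The cell $\phi((0,1))$ is infinite, so $\phi$ is nonconstant.

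The remaining issue is Nash regularity on an open neighborhood of the closed interval, since $\phi$ may fail to be Nash at the endpoints. I would not try to extend through the endpoints. Instead I will cover the open cell by two kinds of closed pieces.
\begin{enumerate}
\item Compact subarcs $\phi([\delta,1-\delta])$, where $\phi$ is Nash on the open neighborhood $(0,1)$ of $[\delta,1-\delta]$. After an affine reparametrization to $[0,1]$, the values lie in the sphere because they lie in $Y$.
\item Germs at the endpoints. By the Puiseux expansion of a semialgebraic arc, or equivalently by Nash curve selection~\cite[Proposition 8.1.13]{BCR98}, $\phi$ near the endpoint $0$ has the form $\psi(s^{1/m})$ with $\psi$ Nash near $0$. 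Thus $\gamma(u)=\phi(u^m)$ is Nash on a neighborhood of $[0,\epsilon]$, including negative $u$, where $\psi(u)$ still makes sense.
\end{enumerate}

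For the endpoint germs there are two concrete points to settle. Reparametrizing by $s=u^m$ gives $\psi$ real-analytic and algebraic at $0$. The extension to negative $u$ satisfies the same polynomial equations $\sum x_i^2=1$, by analytic continuation of the identity, so its values stay in the sphere. However, the extension need not stay in $Y$. The statement only requires that the images $\gamma_j([0,1])$ lie in $Y$, and these do, since $[0,\epsilon]$ maps into the closure of the cell. One extra endpoint chart per cell end, together with one interior chart overlapping both, covers the closed cell. Finitely many cells then give a finite cover.

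The main obstacle is the endpoint step, namely justifying that a Nash arc on $(0,\epsilon)$ with a limit at $0$ becomes Nash on a full neighborhood after the substitution $s=u^m$. I would argue it as follows. Each coordinate of $\phi$ is a bounded algebraic function of $s$. A bounded branch of an algebraic function at $s=0$ has a convergent Puiseux series in $s^{1/m}$ with nonnegative exponents and real coefficients, real because it is real on $(0,\epsilon)$. Substituting $s=u^m$ therefore makes each coordinate an algebraic power series in $u$, convergent on $(-\epsilon',\epsilon')$. This is precisely the Nash condition.
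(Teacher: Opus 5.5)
Your proposal is correct and follows the paper's argument in substance. Both use a Nash cell decomposition, convergent Puiseux expansions with nonnegative exponents for the bounded algebraic coordinates at each endpoint, the substitution $s=u^m$, and analytic continuation of $\sum_i x_i^2=1$, with closedness of $Y$ putting the endpoint images in $Y$. The only difference is cosmetic: the paper gets a single chart per closed cell from the global reparametrization $\chi_M(t)=t^M/(t^M+(1-t)^M)$, for which $\chi_M$ and $1-\chi_M$ are $t^M$ and $(1-t)^M$ times positive analytic units at the two ends, whereas you use two endpoint charts and an overlapping interior chart; the paper also states explicitly that a zero-cell lying in no closed one-cell is isolated, which you leave implicit.
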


\begin{proof}
Nash cell decomposition gives finitely many one-dimensional cells parametrized by Nash maps $\eta:(0,1)\to Y$, and finitely many points. Each coordinate of $\eta$ is a bounded algebraic function. It has a finite limit and a convergent Puiseux expansion with nonnegative exponents at each endpoint. Choose an integer $M$ divisible by all the denominators in these endpoint expansions and substitute
\begin{equation}\label{eq:endpoint-parametrization}
\chi_M(t)=\frac{t^M}{t^M+(1-t)^M},\qquad 0\le t\le1.
\end{equation}
This is an increasing bijection, analytic near $[0,1]$. At zero it is $t^M$ times a positive analytic unit; at one, $1-\chi_M(t)$ has the analogous form. Thus $\eta\circ\chi_M$ extends analytically through both endpoints and remains algebraic, hence Nash. The sphere equation extends by analyticity, and compactness puts the endpoint images in $Y$. These finitely many maps cover the closures of the cells. Every remaining point is isolated: otherwise it would belong to the closure of a one-dimensional cell.
\end{proof}

\begin{theorem}[Uniform finite supports on semialgebraic curves]\label{thm:curve-minimizers}
Let $Y\subset S^{d-1}$ be nonempty, compact, semialgebraic, and of dimension at most one. For every real $c_0$ there is a finite number $N(f,Y,c_0)$ bounding the cardinalities of all subsets $S\subset Y$ on which $f(x\cdot y)-c$ is positive semidefinite, uniformly for every $c\ge c_0$.

In particular, all minimizers of $I_f$ on $\Prob(Y)$ have at most $N(f,Y,\min f)$ support points. No semialgebraicity assumption on the support is needed.
\end{theorem}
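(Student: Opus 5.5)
We combine Lemma~\ref{lem:closed-charts}, Lemma~\ref{lem:jet-cap}, Theorem~\ref{thm:nash-obstruction}, and a compactness argument in the parameter and the shift $c$. First, Lemma~\ref{lem:closed-charts} covers $Y$ by finitely many isolated points and finitely many closed Nash charts $\gamma_j([0,1])$. Each $\gamma_j$ is Nash, nonconstant, and sphere-valued on an open neighborhood of $[0,1]$. Any $S\subset Y$ splits into the isolated points, whose number is fixed and finite, and the pieces $S\cap\gamma_j([0,1])$. It therefore suffices to bound $|S\cap\gamma_j([0,1])|$ for each $j$, uniformly in $c\ge c_0$. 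Since a sub-configuration of a positive semidefinite set is positive semidefinite, we may pass to parameter preimages. We choose one preimage for each point, so the bound on distinct parameters bounds the points. Nonconstancy of an analytic arc makes fibers finite, but choosing one preimage already suffices.

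Second, we use a compactness argument in $(t_0,c)$ for a fixed chart $\gamma=\gamma_j$ with kernel $K(s,t)=f(\gamma(s)\cdot\gamma(t))$. Because $\gamma(t_0)\cdot\gamma(t_0)=1$ and $f$ agrees with the holomorphic germ $F$ near $1$, $K$ is real analytic near every diagonal point $(t_0,t_0)$ with $t_0$ in a neighborhood of $[0,1]$. For each $t_0\in[0,1]$ and each real $c$, we claim some jet matrix $C_n(t_0,c)$ from~\eqref{eq:jet-matrix} has a negative direction. Otherwise every finite block of the Taylor coefficient matrix of $K_c$ at $(t_0,t_0)$ is positive semidefinite. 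By the Gram-series argument in Lemma~\ref{lem:hilbert-continuation}, $K_c$ then has a local holomorphic Hilbert feature map with $K_c(s,t)=\langle F(s),F(t)\rangle$ near $t_0$. In particular $K_c$ is positive semidefinite on every infinite parameter set accumulating at $t_0$. If $t_0$ is an endpoint, we use the open neighborhood of $[0,1]$ to make it interior. This contradicts Theorem~\ref{thm:nash-obstruction}. Lemma~\ref{lem:jet-cap} then gives an open interval $J\ni t_0$, an open set $U\ni c$, and an integer $n$ such that every positive semidefinite restriction of $K_{c'}$ for $c'\in U$ has at most $n$ points in $J$. Compactness of $[0,1]\times[c_0,c_1]$ then gives finitely many such boxes and a uniform bound for all $c$ in any compact range.

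The main obstacle is uniformity as $c\to+\infty$, since compactness only handles bounded $c$. We handle large $c$ directly. The diagonal entries of $K_c$ equal $f(1)-c$. Any positive semidefinite restriction containing even one point forces $f(1)-c\ge0$, so the problem is empty once $c>f(1)$. The relevant range is therefore $c\in[c_0,f(1)]$, which is compact. Summing the finitely many box bounds over charts, and adding the isolated points, defines $N(f,Y,c_0)$.

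For minimizers, Lemma~\ref{lem:support} shows that $f(x\cdot y)-E$ is positive semidefinite on $\supp\mu$, where $E=e_Y(f)\ge\min f$ since $\mu$ is a probability measure. Applying the bound with $c=E\ge c_0=\min f$ gives at most $N(f,Y,\min f)$ support points. No structure of the support is used beyond its inclusion in $Y$.
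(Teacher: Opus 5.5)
Your proof is correct. It differs from the paper's only in how you get uniformity over $c\ge c_0$.

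The paper fixes the single shift $c_0$. It obtains local caps for $f-c_0$ by the same route you use: a negative jet block exists by the Gram-series construction plus Theorem~\ref{thm:nash-obstruction}, Lemma~\ref{lem:jet-cap} turns it into a local cap, and compactness is taken in the parameter alone. It then observes that if $f-c$ is positive semidefinite on $S$ with $c\ge c_0$, then $f-c_0=(f-c)+(c-c_0)\mathbf{1}$ is positive semidefinite there too, because a nonnegative constant kernel is positive semidefinite. So the cap for $c_0$ automatically serves every larger $c$.

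You instead run a joint compactness argument on $[0,1]\times[c_0,f(1)]$. This needs two extra ingredients:
\begin{enumerate}
\item the $c'$-stability clause of Lemma~\ref{lem:jet-cap};
\item the diagonal cutoff $c\le f(1)$, which makes the shift range compact.
\end{enumerate}

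Both routes are valid. The paper's monotonicity step is shorter and needs the Nash obstruction at only one shift. Your joint-compactness scheme would also handle a compact, non-monotone continuous family of kernel perturbations, where no such order argument is available.

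Your detour was avoidable even at the jet level. Raising $c$ changes only the $(0,0)$ entry of $C_n(t_0,c)$, so $C_n(t_0,c)=C_n(t_0,c_0)-(c-c_0)e_0e_0^T$. Likewise, the divided-difference transform of the constant kernel is exactly $e_0e_0^T$. Hence a negative direction found at $c_0$ stays negative, on the same interval, for all $c\ge c_0$.
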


\begin{proof}
Use the finitely many charts of Lemma~\ref{lem:closed-charts} and fix the shift $c_0$. At any parameter $t_0$, if every coefficient block~\eqref{eq:jet-matrix} of
$K_{c_0}(s,t)=f(\gamma_j(s)\cdot\gamma_j(t))-c_0$ were positive semidefinite, the Gram-series construction in Lemma~\ref{lem:hilbert-continuation} would make it a positive kernel on a neighborhood. Theorem~\ref{thm:nash-obstruction} forbids this, even at a critical parameter. Some finite block therefore has a negative direction. Lemma~\ref{lem:jet-cap} supplies a local cardinality cap. Analytic extension through the endpoints lets us treat them in exactly the same way.

Compactness of each parameter interval gives finitely many local caps. Sum them over the charts and add the isolated points. For each distinct image point choose one preimage; distinct image points have distinct chosen parameters, so self-intersections or multiple parametrizations do not invalidate the bound. This bounds every positive set for $f-c_0$.

If $c\ge c_0$ and $f-c$ is positive semidefinite on $S$, then
\begin{equation}\label{eq:shift-monotonicity}
f-c_0=(f-c)+(c-c_0)\mathbf1
\end{equation}
is positive semidefinite there as well, where $\mathbf1$ denotes the constant kernel. Thus the same cap works for every $c\ge c_0$. Finally, $e_Y(f)\ge\min f$, and Lemma~\ref{lem:support} applies.
\end{proof}

The theorem supplies a finite-jet certificate for a local bound, but not a universal closed-form expression for $N(f,Y,c_0)$. The bound can depend on the entire perturbation. A semialgebraic constraint set of dimension at least two is not covered by this argument: its minimizing support need not be semialgebraic or contain a Nash arc.

\begin{corollary}[Unrestricted positive-kernel subtraction on curves]\label{cor:curve-psd-stability}
Under the preceding hypotheses on $f$ and $Y$, let $Q$ be any continuous positive-semidefinite kernel on $Y$. Every minimizer for
\[
K(x,y)=f(x\cdot y)-Q(x,y)
\]
has finite support, with a common bound. More precisely, if $\sup_xQ(x,x)\le D$, all these supports have at most $N(f,Y,\min f-D)$ points. No smallness, zonality, or finite-rank assumption on $Q$ is needed.
\end{corollary}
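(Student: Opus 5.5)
The plan is to reduce everything to the shift-uniform cap of Theorem~\ref{thm:curve-minimizers}. Minimizers for $K$ have support on which the centered kernel is positive semidefinite. The subtracted kernel $Q$ can then be moved to the other side, where it only lowers the shift by a bounded amount.

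\textbf{Step 1: the energy is bounded below.} Let $\mu$ minimize $I_K$ on $\Prob(Y)$; one exists because $K$ is continuous and $Y$ is compact. Write $E=I_K(\mu)$ and $S=\supp\mu$. For any probability measure $\nu$ on $Y$,
\[
I_Q(\nu)\le\iint\sqrt{Q(x,x)Q(y,y)}\,d\nu\,d\nu\le D,
\]
using the $2\times2$ positivity bound $|Q(x,y)|^2\le Q(x,x)Q(y,y)$. Hence $I_K(\nu)\ge\min f-D$ for every $\nu$, and in particular $E\ge\min f-D$.

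\textbf{Step 2: positivity on the support.} Lemma~\ref{lem:support} applies to the symmetric continuous kernel $K$; it does not need zonality or a sign. It says that $K-E$ is positive semidefinite on $S$. Then
\[
f(x\cdot y)-E=\bigl(K(x,y)-E\bigr)+Q(x,y)
\]
is a sum of two positive-semidefinite kernels on $S$, so $f-E$ is positive semidefinite on $S$.

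\textbf{Step 3: apply the curve theorem.} Set $c_0=\min f-D$. Since $E\ge c_0$, Theorem~\ref{thm:curve-minimizers} with this $c_0$ gives $|S|\le N(f,Y,\min f-D)$. This bound is uniform over all such $Q$ with $\sup_x Q(x,x)\le D$, and over all minimizers of each such problem. If one only assumes $Q$ is continuous, $D=\sup_x Q(x,x)$ is finite by compactness, so every minimizer is finite.

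\textbf{Main obstacle.} The only delicate point is Step 1, which needs a lower bound on $E$ that depends only on $D$ and not on finer structure of $Q$. The Cauchy--Schwarz bound on the diagonal settles it. There is no regularity issue, because Theorem~\ref{thm:curve-minimizers} already handles arbitrary, possibly nonsemialgebraic, positive sets uniformly in the shift $c\ge c_0$.
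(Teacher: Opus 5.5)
Your proposal is correct and follows the paper's own proof essentially step for step. The diagonal Cauchy--Schwarz bound gives $E\ge\min f-D$, support positivity of $K-E$ plus $Q$ gives positivity of $f-E$ on the minimizing support, and the shift-uniform cap of Theorem~\ref{thm:curve-minimizers} with $c_0=\min f-D$ then gives the bound.
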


\begin{proof}
Positive semidefiniteness gives $|Q(x,y)|\le\sqrt{Q(x,x)Q(y,y)}\le D$, so the modified minimum $E$ is at least $\min f-D$. On a minimizing support, $K-E$ is positive semidefinite. Adding $Q$ makes $f-E$ positive semidefinite there, and Theorem~\ref{thm:curve-minimizers} gives the bound.
\end{proof}

\subsection{Scalar examples beyond powers of an inner product}

The following singularity tests make the criterion concrete. In each case entire real perturbations and nonzero real scalar multiples are allowed.

\begin{example}[Logarithmic and rational potentials]\label{ex:log-rational}
Let $P\in\R[t]$ be nonconstant and positive on $[1,\infty)$, and assume the chosen potential is continuous on $[-1,1]$. For $F=\log P$, every complex root $\zeta$ of multiplicity $e$ contributes the local term $e\log(t-\zeta)$ plus a holomorphic germ. After ramification of order $\ell$, its derivative has nonzero residue $e\ell$. It cannot be the derivative of a holomorphic germ, so $\mathcal R_\zeta(F)=\varnothing$.

For a real rational function with a finite pole $\zeta$ and no poles on $[1,\infty)$, every ramification retains a pole. Hence again $\mathcal R_\zeta(F)=\varnothing$. In particular, both $\log(a+t^2)+H(t)$ and $(a+t^2)^{-1}+H(t)$, $a>0$, satisfy the curve and semialgebraic-support conclusions for every entire real $H$.
\end{example}

\begin{example}[Polynomial powers]\label{ex:polynomial-powers}
Suppose $P\in\R[t]$ is nonconstant and positive on $[1,\infty)$, and use the real germ $F=P^\alpha$. At a root $\zeta$ of multiplicity $e$, it has the form $(t-\zeta)^{e\alpha}a(t)$ on each branch, with $a$ holomorphic and nonzero. If $\alpha$ is irrational, no positive ramification order is removable. If $\alpha<0$, no order is removable either, since the resulting power has negative exponent. For positive rational $\alpha$, removability is exactly the condition $e\alpha\ell\in\mathbb Z_{\ge0}$.

Thus a positive rational exponent gives the obstruction when its effective denominator at a root is at least $d$ for $\zeta=0$, or at least $d+1$ for $\zeta\ne0$. For $F(t)=t^p$, this recovers the Nash-arc obstruction for every irrational $p>0$, and for reduced rational $p=a/b$ with $b\ge d$. For $a>0$, $(a+t^2)^\alpha+H(t)$ is a continuous concrete family for every real irrational $\alpha$.
\end{example}

The same test covers a single-valued essential singularity at a finite point: composition with a finite branched covering does not make such a singularity removable. The finite singularity set and the analytic continuation along $[1,\infty)$ must still be checked; real analyticity only on $[-1,1]$ is not enough.

\section{Unconditional rational-root discreteness and perturbations}\label{sec:rational-global}

For rational roots, finite rank is automatic on positive supports. One can therefore select a Nash arc in an algebraic feature graph even if the original set is not semialgebraic. A direct meromorphic-order proof also removes the diagonal-continuation assumption of Theorem~\ref{thm:nash-obstruction}.

\begin{theorem}[Multiplicity-sensitive rational-root criterion]\label{thm:rational-roots}
Let $P\in\R[t]$ be nonconstant, and let $a,b$ be positive coprime integers. Put $f(t)=|P(t)|^{a/b}$. If $f(x\cdot y)$ is positive semidefinite on an infinite $S\subset S^{d-1}$, then $P(1)\ne0$ and every complex root $\zeta$ of $P$, of multiplicity $e_\zeta$, satisfies
\begin{equation}\label{eq:root-threshold}
\frac{b}{\gcd(b,e_\zeta)}\le
\begin{cases}
d-1,&\zeta=0,\\
d,&\zeta\ne0.
\end{cases}
\end{equation}
Consequently, if $P(1)=0$, or if any inequality~\eqref{eq:root-threshold} fails, every minimizing measure on every nonempty compact $Y\subset S^{d-1}$ has finite support.
\end{theorem}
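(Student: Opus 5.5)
The plan is to turn positivity into finite rank using the rational Schur-power identity. I then select a Nash arc in an \emph{algebraic} feature set built from $S$, and run the moving-zero argument of Lemma~\ref{lem:moving-zero} with divisor orders instead of Hilbert continuation.

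\textbf{Preliminary reductions.} First I dispose of $P(1)=0$. Then $K(x,x)=0$ on $S$, and positivity forces $K\equiv0$ on $S$. So every inner product $x\cdot y$ with $x,y\in S$ lies in the finite real root set of $P$. A subset of the sphere with finitely many inner products is finite, by the standard $s$-distance bound obtained by evaluating the annihilating polynomials. This contradicts $|S|=\infty$, so $P(1)\ne0$.

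Next, $K=f(x\cdot y)$ is positive semidefinite on $S$, and $K^{\circ2b}=P(x\cdot y)^{2a}$. Lemma~\ref{lem:schur-rank} therefore gives $r=\rank K\le L_d(2ak)<\infty$. I choose anchors $z_1,\dots,z_r\in S$ as in~\eqref{eq:anchor-representation}, so that $K(x,y)=k(x)^TG^{-1}k(y)$ on $S$, with $k_i(x)=|P(x\cdot z_i)|^{a/b}$.

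\textbf{Algebraic feature curve.} Let $\Gamma=\{(x,k(x)):x\in S\}\subset\R^d\times\R^r$, and let $Z$ be its real Zariski closure. The polynomial
\[
\Phi(x,u;y,v)=(u^TG^{-1}v)^{2b}-P(x\cdot y)^{2a}
\]
vanishes on $\Gamma\times\Gamma$. Taking closures one slot at a time, it vanishes on $Z\times Z$. The equations $|x|^2=1$ and $u_i^{2b}=P(x\cdot z_i)^{2a}$ also hold on $Z$. Hence the projection of $Z$ to $x$ has finite fibres and infinite image.

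Nash curve selection~\cite[Proposition 8.1.13]{BCR98} in the semialgebraic set $Z$ gives a Nash arc $t\mapsto(x(t),u(t))$ in $Z$ with $x$ nonconstant. I complexify it, take the normalization $C$ of the projective closure of its algebraic curve, and regard $x_i$ and $u_i$ as meromorphic functions on $C$ with $\sum x_i^2=1$. Since $\Phi$ vanishes on $Z\times Z$, analytic continuation gives a meromorphic identity on $C\times C$:
\[
\varphi(z,w)^{2b}=P\bigl(x(z)\cdot x(w)\bigr)^{2a},\qquad \varphi(z,w)=u(z)^TG^{-1}u(w).
\]

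\textbf{Order count and conclusion.} Fix a root $\zeta$ and suppose~\eqref{eq:root-threshold} fails there. Lemma~\ref{lem:moving-zero} applies to the real arc. Outside the finite pole set of the $u_i$, it gives a real $t_0$ and a point $z_0$ where $A_{t_0}(z)-\zeta$ vanishes to order $\ell$, with $1\le\ell\le r_\zeta-1\le D_\zeta$.

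Near $z_0$, the function $P(A_{t_0}(z))^{2a}$ has a zero of order exactly $2ae_\zeta\ell$, because the other roots of $P$ differ from $\zeta$. On the other hand, it equals $\varphi(\cdot,t_0)^{2b}$ with $\varphi(\cdot,t_0)$ holomorphic at $z_0$, so its order is divisible by $2b$. Therefore $b\mid ae_\zeta\ell$. Since $\gcd(a,b)=1$, this means $b/\gcd(b,e_\zeta)$ divides $\ell$, and so $b/\gcd(b,e_\zeta)\le\ell\le D_\zeta$. This contradicts the failed inequality, which proves~\eqref{eq:root-threshold}.

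For the consequence, $f\ge0$, so $E\ge0$, and Lemma~\ref{lem:support} makes $K$ positive semidefinite on every minimizing support. That support must then be finite.

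\textbf{Main obstacle.} I expect the delicate step to be the transfer from $S$, which need not be semialgebraic, to the curve. The point is to ensure that the identity holds between \emph{arc} points and not merely between arc points and $S$. This is why I close up $\Gamma$ itself and extend $\Phi$ slot by slot to $Z\times Z$. I also need the selected arc to have nonconstant $x$-part, which is what the finite-fibre equations $u_i^{2b}=P(x\cdot z_i)^{2a}$ guarantee.
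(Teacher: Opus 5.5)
Your proposal is correct and follows the paper's proof essentially step for step: finite rank via the Schur power, the real Zariski closure of the feature graph with the pair identity extended one slot at a time, finite $x$-fibres, a Nash arc, normalization, the moving-zero lemma, and the order count forcing $b/\gcd(b,e_\zeta)\mid\ell$. The only deviations are cosmetic. You exclude $P(1)=0$ via the $s$-distance bound instead of a local accumulation argument, and you get finite fibres from anchor coordinates with the equations $u_i^{2b}=P(x\cdot z_i)^{2a}$, where the paper uses a Euclidean feature map with $|u|^2=f(1)$ and Cauchy--Schwarz.
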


\begin{proof}
Pass to the closure of $S$. If $f(1)=0$, positivity and the zero diagonal force the whole support kernel to vanish. Since $P$ is nonzero, there is a neighborhood of $1$ containing no other root. Distinct sufficiently close sphere points have inner product in that neighborhood but below $1$, contradicting the vanishing kernel. Compactness then forces $S$ to be finite. Hence an infinite positive set has $c=f(1)>0$.

On $S$ the positive kernel $K=f(x\cdot y)$ satisfies
\begin{equation}\label{eq:root-schur-identity}
K(x,y)^{2b}=P(x\cdot y)^{2a}.
\end{equation}
The right side has finite polynomial section rank, so Lemma~\ref{lem:schur-rank} gives finite rank for $K$. Let $h:S\to\R^R$ be its continuous feature map, with $|h(x)|^2=c$. Take the real Zariski closure $Z$ of its graph in $\R^{d+R}$. It is compact because $|x|^2=1$ and $|u|^2=c$ hold on it. Moreover,
\begin{equation}\label{eq:graph-pair-identity}
(u\cdot v)^{2b}=P(x\cdot y)^{2a}
\qquad((x,u),(y,v)\in Z).
\end{equation}
To justify the two-variable extension, first fix a graph point in one variable and take Zariski closure in the other, then reverse the roles. If two points of $Z$ have the same $x$,~\eqref{eq:graph-pair-identity} and their norms imply $u=\pm v$. Thus the projection to $x$ has finite fibres.

An infinite compact real algebraic set contains a nonconstant Nash arc. Its $x$-projection is nonconstant because the fibres are finite. Normalize and compactify the complex algebraic curve of this arc. Its coordinates $x_i,u_j$ are meromorphic on a compact connected Riemann surface, and
\begin{equation}\label{eq:meromorphic-pair-identity}
\begin{gathered}
B(s,t)^{2b}=P(A(s,t))^{2a},\\
A(s,t)=\sum_i x_i(s)x_i(t),\quad
B(s,t)=\sum_j u_j(s)u_j(t).
\end{gathered}
\end{equation}
These identities follow by continuation of~\eqref{eq:graph-pair-identity}.

Fix a root $\zeta$ of multiplicity $e_\zeta$. Lemma~\ref{lem:moving-zero}, with all coordinate poles excluded, gives a real $t_0$ and a zero of $A(\cdot,t_0)-\zeta$ of order $1\le\ell\le r_\zeta-1$. Taking orders in~\eqref{eq:meromorphic-pair-identity} yields
\begin{equation}\label{eq:order-divisibility}
b\,\ord B(\cdot,t_0)=a e_\zeta\ell,
\qquad
\frac{b}{\gcd(b,e_\zeta)}\mid\ell.
\end{equation}
The denominator is therefore at most $r_\zeta-1$, which gives~\eqref{eq:root-threshold}.

For the minimizing-measure conclusion, when $f(1)>0$, every probability has positive energy: a small neighborhood of a support point has positive kernel on its square. The minimum is attained and positive, so Lemma~\ref{lem:support} makes the uncentered kernel positive on every minimizing support. If $f(1)=0$, the minimum is zero and the same positivity follows directly. Apply the positive-support result.
\end{proof}

The proof does not require $P$ to have no root on $[1,\infty)$. That assumption was needed only for the Hilbert-norm continuation route when no finite-rank features are available.

Without a denominator restriction, the finite-rank identity still gives the discrete minimizing replacement proved in Corollary~\ref{cor:intro-existence}. That conclusion needs no Nash arc: it is the rank bound followed by Carath\'eodory's theorem. What the denominator condition adds is that an infinite minimizing support is impossible. We now quantify this stronger assertion.

\subsection{An explicit uniform full-sphere bound}

The preceding argument proves finiteness but does not control the degree of its Zariski closure. A bounded-degree algebraic enlargement repairs this and yields a uniform cardinality estimate. We first record a standard coarse counting argument.

\begin{lemma}[Finite real zero sets]\label{lem:finite-real-zeros}
If real polynomials of degree at most $m\ge1$ in $n$ variables have a finite common real zero set $Z$, then
\begin{equation}\label{eq:real-zero-bound}
|Z|\le(2m-1)^n.
\end{equation}
\end{lemma}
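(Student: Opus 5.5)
The plan is to reduce to an isolated-zero count for a single polynomial system and then apply a Milnor--Thom/B\'ezout-type bound. First I replace the system by one equation: with $g_1,\dots,g_s$ the given polynomials of degree at most $m$, put $G=\sum_i g_i^2$. Then $G$ has degree at most $2m$, $G\ge0$, and its real zero set is exactly $Z$. Every point of $Z$ is an isolated global minimum of $G$, hence an isolated critical point. This suggests counting critical points, but degenerate critical points are the obstacle. I would handle them by perturbation.

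\textbf{Step 1 (perturbation).} For small $\varepsilon>0$ set
\[
G_\varepsilon(x)=G(x)+\varepsilon\sum_{k=1}^n x_k^{2m}.
\]
Near each point $z\in Z$ choose a small closed ball $B_z$, pairwise disjoint, such that $G>0$ on $\partial B_z$. For $\varepsilon$ small, $G_\varepsilon$ attains its minimum over $B_z$ at an interior point, and this point is a critical point of $G_\varepsilon$. So it suffices to show that $\nabla G_\varepsilon=0$ has at most $(2m-1)^n$ real solutions.

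\textbf{Step 2 (B\'ezout count).} The equations $\partial_k G_\varepsilon=0$ are $n$ polynomials of degree at most $2m-1$. Their degree-$(2m-1)$ leading forms are $2m\varepsilon x_k^{2m-1}$ plus the leading parts of $\partial_kG$. The leading parts of $\partial_kG$ have degree at most $2m-1$ and are not multiplied by $\varepsilon$. Because of this, I would instead use a generic perturbation $\varepsilon\sum_k x_k^{2m}+\delta\,\ell(x)$ and invoke the standard fact (Milnor's lemma): for a generic choice the complex solution set of the critical equations is finite with no solutions at infinity. A cleaner way to get this is to homogenize and note that the leading system is $\nabla(G^{\rm top}+\varepsilon\sum x_k^{2m})=0$. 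Here $G^{\rm top}$ is a sum of squares of real forms, and I would choose the $\varepsilon$-term to dominate generically so that this leading system has only the trivial zero. B\'ezout then gives at most $(2m-1)^n$ complex, hence real, isolated solutions. Only the minimizers found in the balls need to be counted, and these lie among finitely many isolated solutions. If positive-dimensional components remain, I would pass to the isolated ones using the refined B\'ezout bound, which counts isolated solutions by the product of degrees.

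\textbf{Step 3 (the obstacle).} The main difficulty is ensuring that each ball contributes an \emph{isolated} critical point counted by B\'ezout. A local minimum of $G_\varepsilon$ could sit on a positive-dimensional critical component. I would fix this by adding a small generic linear term $\delta\,\lambda\cdot x$ with $\delta\ll\varepsilon$. By Sard's theorem applied to $\nabla(G+\varepsilon\sum x_k^{2m})$, all critical points of the resulting function are then nondegenerate. The dominant $x_k^{2m}$ terms still prevent solutions at infinity. Interior minima in the balls persist, so $|Z|$ is at most the number of nondegenerate critical points, which is at most $(2m-1)^n$.
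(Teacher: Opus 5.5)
Your argument is correct and is essentially the paper's proof: sum of squares, disjoint balls with positive boundary values, a small Sard-generic linear tilt making the interior minima nondegenerate real critical points (hence isolated complex solutions of $n$ gradient equations of degree at most $2m-1$), and the refined B\'ezout bound on isolated solutions. The extra term $\varepsilon\sum_k x_k^{2m}$ is unnecessary once you use that refined bound; if you keep it to exclude solutions at infinity, the justification is not that the $\varepsilon$-term dominates (real positivity of $G^{\mathrm{top}}+\varepsilon\sum_k x_k^{2m}$ does not rule out nontrivial complex zeros of its gradient), but that the discriminant of this pencil is a nonzero polynomial in $\varepsilon$ because the Fermat form is smooth, so all but finitely many $\varepsilon$ work.
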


\begin{proof}
Let $V$ be the sum of their squares, so $\deg V\le2m$ and $V^{-1}(0)=Z$. The empty case is immediate. Otherwise choose disjoint closed balls about the points of $Z$, with $V$ strictly positive on their boundaries. For all sufficiently small vectors $\lambda$, each ball contains an interior minimum of $V(z)-\lambda\cdot z$. Choose such a $\lambda$ to be a regular value of $\nabla V$, using Sard's theorem. The resulting critical points are nonsingular, hence isolated complex solutions of the $n$ polynomial equations $\partial_jV=\lambda_j$, each of degree at most $2m-1$. B\'ezout's inequality for isolated complex solutions bounds their number by $(2m-1)^n$, proving~\eqref{eq:real-zero-bound}. The full complex solution set need not be finite.
\end{proof}

\begin{theorem}[Uniform rational-root support bound]\label{thm:uniform-global}
Let $P,a,b$ satisfy Theorem~\ref{thm:rational-roots}, with $P(1)\ne0$, and suppose a root violates~\eqref{eq:root-threshold}. Put $k=\deg P$ and
\begin{equation}\label{eq:uniform-parameters}
R_0=L_d(2ak),\qquad m=\max\{2b,2ak\},\qquad
N=(2m-1)^{d+R_0}.
\end{equation}
Every subset of $S^{d-1}$ on which $|P(x\cdot y)|^{a/b}$ is positive semidefinite has at most $N$ points. In particular the same bound holds for every minimizing support on every compact sphere constraint.
\end{theorem}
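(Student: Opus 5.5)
We will replace the (uncontrolled-degree) Zariski closure in the proof of Theorem~\ref{thm:rational-roots} by an explicit algebraic set cut out by polynomials of degree at most $m$ in $d+R_0$ variables. Then we will show that this set is finite and apply Lemma~\ref{lem:finite-real-zeros}.

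Let $S$ be a positive-semidefinite set for $K=|P(x\cdot y)|^{a/b}$, and assume first that it is finite with, say, more than $N$ points; the infinite case then follows, since any infinite positive set contains finite positive subsets of every size. As in Corollary~\ref{cor:intro-existence}, the identity $K^{\circ 2b}=P(x\cdot y)^{2a}$ and Lemma~\ref{lem:schur-rank} give $\rank K\le R_0$. Hence there is a feature map $h:S\to\R^{R_0}$, padded with zeros if needed, with $h(x)\cdot h(y)=K(x,y)$, $|h(x)|^2=c:=|P(1)|^{a/b}>0$, and $(h(x)\cdot h(y))^{2b}=P(x\cdot y)^{2a}$ on $S$. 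Let $G=\{(x,h(x)):x\in S\}\subset\R^{d+R_0}$.

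The key construction is the set
\[
Z=\bigl\{(y,v):\ |y|^2=1,\ |v|^2=c,\ (u\cdot v)^{2b}=P(x\cdot y)^{2a}\ \text{for all }(x,u)\in G\bigr\}.
\]
It is defined by polynomials in $(y,v)$ of degree at most $\max\{2,2b,2ak\}=m$ (the coefficients depend on $G$, which is harmless), and it contains $G$. We first check that the projection $Z\to S^{d-1}$ separates the relevant points. If the identity held with $(y,v)\in Z$ in place of points of $G$, the argument of Theorem~\ref{thm:rational-roots} would give $v=\pm w$ over equal $y$. So $|G|\le|Z|$ whenever $Z$ is finite, and Lemma~\ref{lem:finite-real-zeros} then yields $|S|=|G|\le(2m-1)^{d+R_0}=N$.

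The main obstacle is proving that $Z$ is finite; the constraints only test against the finite set $G$, so this is not automatic. I would try to get finiteness from a self-improvement step: the equations defining $Z$ must be shown to force the same pair identity among all points of $Z$. One route is to first pass to the infinite/closure case: for infinite $S$, take $G$ to be the full closed graph. Then $Z$ is compact and real algebraic by Hilbert's basis theorem, while still being defined by degree-$\le m$ polynomials, since it is an intersection of such sets. By reversing variables as in the proof of Theorem~\ref{thm:rational-roots}, $Z\times G$ and then $Z\times Z$ satisfy the pair identity. If $Z$ were infinite, it would contain a Nash arc with nonconstant $x$-projection. The moving-zero argument (Lemma~\ref{lem:moving-zero} with the order relation~\eqref{eq:order-divisibility}) would then contradict the violated threshold~\eqref{eq:root-threshold}. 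Hence $Z$ is finite, and it contains the infinite graph $G$, a contradiction. This forces $S$ to be finite. For finite $S$ the same bound is obtained because $Z\supseteq G$ is finite by that argument, so $|S|\le N$. Finally, minimizing supports are positive sets by Lemma~\ref{lem:support}, because the energy is positive when $P(1)\ne0$.
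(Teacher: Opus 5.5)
\textbf{There is a genuine gap: your set $Z$ is the wrong envelope, and the self-improvement step you flag as the main obstacle does not go through.} Write the pair polynomial as $T(z,w)=(u\cdot v)^{2b}-P(x\cdot y)^{2a}=\nu(z)^TJ\nu(w)$. Here $\nu$ is the vector of monomials of degree at most $m$ and $J$ is a fixed symmetric matrix. Let $W=\Span\{\nu(g):g\in G\}$.

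The pair identity on $G\times G$ says exactly that $W$ is $J$-isotropic. Your defining condition ``$T(g,w)=0$ for all $g\in G$'' says exactly that $\nu(w)\in W^{\perp_J}$. That space is generally much larger than $W$ and not isotropic, so nothing forces $T=0$ on $Z\times Z$.

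The reversal trick from Theorem~\ref{thm:rational-roots} works there only because $Z$ is the Zariski closure of $G$, the smallest algebraic set containing $G$. In your setting, fixing $z\in Z$, the zero set of $T(z,\cdot)$ is Zariski closed and contains $G$. Hence it contains $\overline{G}^{\mathrm{Zar}}$, but not your larger $Z$. Without the identity for pairs of points on a Nash arc in $Z$, the meromorphic identity~\eqref{eq:meromorphic-pair-identity} and the moving-zero contradiction are unavailable.

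In fact $Z$ is typically infinite when $G$ is finite. Take $P(t)=t$ and $G=\{(x_0,u_0)\}$, and note $R_0\ge2$. Then every $y\in S^{d-1}$ admits some $v$ with $|v|=1$ and $u_0\cdot v=|x_0\cdot y|^{a/b}$, so $(y,v)\in Z$. Hence your assertion that, for finite $S$, ``$Z\supseteq G$ is finite by that argument'' is false, and Lemma~\ref{lem:finite-real-zeros} cannot be invoked. The detour through infinite closed $G$ only reproves the qualitative finiteness of Theorem~\ref{thm:rational-roots}, not the bound $N$.

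\textbf{The missing idea is to cut out the envelope by membership in $W$ rather than by $J$-orthogonality to it:}
\[
Z_W=\{(x,u):|x|^2=1,\ |u|^2=c,\ \nu(x,u)\in W\}.
\]
This set has the properties you need:
\begin{enumerate}
\item It is still given by linear equations in the monomials together with the two norm equations. So it is cut out by polynomials of degree at most $m$ in $d+R_0$ variables.
\item It contains the graph $G$.
\item Isotropy of $W$ gives $T(z,w)=\nu(z)^TJ\nu(w)=0$ for all $z,w\in Z_W$ with no further argument.
\end{enumerate}
With this envelope your remaining steps go through, whether $S$ is finite or infinite:
\begin{enumerate}
\item The Cauchy--Schwarz fibre bound shows the projection to $x$ has fibres of size at most two.
\item A Nash arc in an infinite $Z_W$ would therefore have nonconstant $x$-projection.
\item The order divisibility~\eqref{eq:order-divisibility} against the violated threshold then makes $Z_W$ finite.
\end{enumerate}
Lemma~\ref{lem:finite-real-zeros} then gives $|S|\le|Z_W|\le N$.
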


\begin{proof}
The rank bound gives a feature map $h:S\to\R^{R_0}$, with zero padding if necessary and $|h(x)|^2=c:=|P(1)|^{a/b}$. For $z=(x,u)$ and $w=(y,v)$ in $\R^{d+R_0}$, the pair polynomial
\[
T(z,w)=(u\cdot v)^{2b}-P(x\cdot y)^{2a}
\]
has degree at most $m$ separately in each variable. Let $\nu(z)$ be the vector of all monomials of degree at most $m$, including the constant. Then $T(z,w)=\nu(z)^TJ\nu(w)$ for a fixed real symmetric matrix $J$.

Let $W$ be the linear span of $\nu(x,h(x))$ for $x\in S$. The polynomial pair identity on the graph says that every pair of vectors in $W$ has zero $J$-pairing. Hence the compact real algebraic set
\begin{equation}\label{eq:bounded-degree-envelope}
Z_W=\{(x,u):|x|^2=1,\ |u|^2=c,\ \nu(x,u)\in W\}
\end{equation}
contains the graph and satisfies $T=0$ on its square.

The projection of $Z_W$ to $x$ has fibres of size at most two, by the same Cauchy--Schwarz argument as before. If $Z_W$ were infinite, a Nash arc with nonconstant $x$-projection would yield the meromorphic identity~\eqref{eq:meromorphic-pair-identity}. The moving-zero argument would again force $b/\gcd(b,e_\zeta)$ to divide a positive order below its stated threshold. This is impossible. Thus $Z_W$ is finite. We have used its polynomial identity, not an assertion that the original positive-root kernel is positive semidefinite on its whole projection.

Choose linear equations defining $W$ in the monomial-coordinate space. Together with the two norm equations,~\eqref{eq:bounded-degree-envelope} is therefore defined by polynomials of degree at most $m\ge2$ in $d+R_0$ variables. Lemma~\ref{lem:finite-real-zeros} gives $|S|\le|Z_W|\le N$. A fibre need not contain both signs, so no additional factor of two is removed.
\end{proof}

This bound is a complexity estimate, not a practical atom count. It grows rapidly with numerator and denominator and supplies no uniform bound along rational approximations to an irrational exponent.

If $P(1)=0$, uniformity instead follows by separation. A positive restriction has zero diagonal and is identically zero. If $r<1$ is the largest root of $P$ in $[-1,1)$, its distinct support points have Euclidean distance at least $\delta=\sqrt{2(1-r)}$. Disjoint balls of radius $\delta/2$ lie in the ball of radius $1+\delta/2$, giving the bound $\lfloor(1+2/\delta)^d\rfloor$. If there is no such root, at most one point is possible.

\subsection{A stability principle in the order of positive kernels}

\begin{proposition}[Negative positive-semidefinite perturbations]\label{prop:psd-stability}
Let $X$ be compact and let $H$ be a continuous symmetric kernel such that every subset carrying a positive-semidefinite restriction of $H$ is finite. Let $Q$ be a continuous positive-semidefinite kernel. If
\begin{equation}\label{eq:stability-energy-condition}
\min_{\mu\in\Prob(X)} I_{H-Q}(\mu)\ge0,
\end{equation}
then every minimizer of $I_{H-Q}$ has finite support. If all positive-semidefinite restrictions of $H$ have at most $N$ points, the same $N$ bounds every modified minimizing support. A sufficient condition is
\begin{equation}\label{eq:stability-diagonal-condition}
\sup_{x\in X}Q(x,x)\le e_X(H).
\end{equation}
The kernel $Q$ need not be zonal, polynomial, or finite rank.
\end{proposition}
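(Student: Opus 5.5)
The plan is to reduce the modified problem to positivity of $H$ itself on a minimizing support, using Lemma~\ref{lem:support} together with the order structure of positive kernels. Let $\mu$ minimize $I_{H-Q}$ on $\Prob(X)$, with energy $E$ and support $S$. Lemma~\ref{lem:support} applies to the continuous symmetric kernel $K=H-Q$ and shows that $H-Q-E$ is positive semidefinite on $S$.

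Hypothesis~\eqref{eq:stability-energy-condition} says exactly $E\ge0$. The constant kernel $E\mathbf 1$ is then positive semidefinite, and $Q$ is positive semidefinite by assumption. Finite positive-semidefinite matrices form a convex cone, so the decomposition
\[
H=(H-Q-E)+Q+E\mathbf 1
\]
shows that every finite matrix $[H(x_i,x_j)]$ with $x_i\in S$ is positive semidefinite. Thus $S$ carries a positive-semidefinite restriction of $H$, and by hypothesis $S$ is finite. If all such sets have at most $N$ points, then $|S|\le N$. This is the same shift-monotonicity argument as in~\eqref{eq:shift-monotonicity} and Corollary~\ref{cor:curve-psd-stability}. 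Existence of a minimizer follows from compactness of $\Prob(X)$ and continuity of $I_{H-Q}$, but the statement concerns every minimizer, so existence is not needed.

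For the sufficient condition~\eqref{eq:stability-diagonal-condition}, I need $I_{H-Q}(\mu)\ge0$ for every probability $\mu$. A positive-semidefinite continuous $Q$ satisfies $|Q(x,y)|\le\sqrt{Q(x,x)Q(y,y)}\le D$ with $D=\sup_xQ(x,x)$, since each $2\times2$ principal block is positive semidefinite. Hence $I_Q(\mu)\le D$, and so
\[
I_{H-Q}(\mu)=I_H(\mu)-I_Q(\mu)\ge e_X(H)-D\ge0.
\]
Since the minimum is also attained, condition~\eqref{eq:stability-energy-condition} holds.

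No step is a serious obstacle. The only point that needs care is that Lemma~\ref{lem:support} gives positivity of the centered kernel $H-Q-E$, not of $H-Q$. The sign $E\ge0$ is exactly what lets us discard the constant term, which is why it is assumed.
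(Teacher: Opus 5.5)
Your proposal is correct and follows the paper's proof essentially verbatim. Lemma~\ref{lem:support} together with $E\ge0$ gives positivity of $H$ on a minimizing support; you write this as the cone decomposition $H=(H-Q-E)+Q+E\mathbf 1$, where the paper first notes that $H-Q$ is positive and then adds $Q$. The sufficient condition then follows from $|Q(x,y)|\le\sqrt{Q(x,x)Q(y,y)}$, which gives $I_{H-Q}(\mu)\ge e_X(H)-\sup_xQ(x,x)$, exactly as in the paper.
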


\begin{proof}
By Lemma~\ref{lem:support} and~\eqref{eq:stability-energy-condition}, $H-Q$ is positive semidefinite on a minimizing support. Adding $Q$ makes $H$ positive there, so the support is finite. To verify the sufficient condition, positivity gives $|Q(x,y)|\le\sqrt{Q(x,x)Q(y,y)}$, and hence $I_Q(\mu)\le\sup_xQ(x,x)$. Thus $I_{H-Q}(\mu)\ge e_X(H)-\sup_xQ(x,x)$.
\end{proof}

\begin{corollary}[A full-sphere non-$p$-frame family]\label{cor:explicit-family}
Let $a,b$ be positive coprime integers with $b\ge d+1$. Let $Q$ be any continuous positive-semidefinite kernel on $S^{d-1}$ with $\sup_xQ(x,x)\le1$. Then every minimizing measure for
\begin{equation}\label{eq:nonpframe-family}
K(x,y)=(1+(x\cdot y)^2)^{a/b}-Q(x,y)
\end{equation}
has finite support. In particular this holds for~\eqref{eq:intro-global-example}, or for a subtraction $\sum_jc_j(x\cdot y)^{2j}$ with $c_j\ge0$ and $\sum_jc_j\le1$.
All such minimizing supports have a common bound from Theorem~\ref{thm:uniform-global}, depending only on $d,a,b$, not on $Q$.
\end{corollary}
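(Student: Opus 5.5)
The corollary follows by applying Proposition~\ref{prop:psd-stability} with $X=S^{d-1}$ (or any compact constraint $Y\subset S^{d-1}$) and $H(x,y)=(1+(x\cdot y)^2)^{a/b}$, i.e.\ $f=|P|^{a/b}$ with $P(t)=1+t^2$. We have to check three things: $H$ has a uniform cap on positive-semidefinite restrictions, $H$ satisfies the diagonal condition~\eqref{eq:stability-diagonal-condition}, and the named special cases fit the hypotheses on $Q$.

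\textbf{Step 1: uniform cap for $H$.} Here $k=2$ and $P(1)=2\ne0$. The roots of $P$ are $\zeta=\pm i$, which are simple and nonzero, so $e_\zeta=1$ and $b/\gcd(b,1)=b\ge d+1>d$. Thus~\eqref{eq:root-threshold} fails for $\zeta=i$. Theorem~\ref{thm:uniform-global} then says every subset of $S^{d-1}$ carrying a positive-semidefinite restriction of $H$ has at most
\[
N=(2\max\{2b,4a\}-1)^{d+L_d(4a)}
\]
points. This is $N_{d,a,b}$ from~\eqref{eq:intro-perturbed-bound}, and it depends only on $d,a,b$. The bound is on subsets of the sphere, so it restricts to any compact $Y$.

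\textbf{Step 2: the energy condition.} Since $H\ge1$ pointwise and $\mu$ is a probability measure, $I_H(\mu)\ge1$ for every $\mu$, so $e_X(H)\ge1\ge\sup_xQ(x,x)$. This is~\eqref{eq:stability-diagonal-condition}. Proposition~\ref{prop:psd-stability} then shows that every minimizer of $I_{H-Q}$ on $\Prob(X)$ has support of size at most $N$. The same $N$ serves for every admissible $Q$.

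\textbf{Step 3: the special cases.} By Schoenberg (or directly, as a Gram kernel of $x^{\otimes 2j}$), $(x\cdot y)^{2j}$ is positive semidefinite, continuous, and has diagonal $1$. Hence $\varepsilon(x\cdot y)^{2j}$ with $0\le\varepsilon\le1$, and any $\sum_jc_j(x\cdot y)^{2j}$ with $c_j\ge0$ and $\sum c_j\le1$, are continuous positive-semidefinite kernels with $\sup_xQ(x,x)\le1$. For an infinite series, convergence is uniform because $|c_j(x\cdot y)^{2j}|\le c_j$, so the limit is still continuous and positive semidefinite.

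The only point needing care is Step 1: we must confirm that Theorem~\ref{thm:uniform-global}'s hypothesis is exactly the failure of~\eqref{eq:root-threshold} at one root. This holds for the simple nonzero roots $\pm i$ because $b\ge d+1$. No further difficulty is expected.
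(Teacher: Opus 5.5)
Your proposal is correct and follows essentially the same route as the paper: check that the simple nonzero roots $\pm i$ of $1+t^2$ violate the threshold~\eqref{eq:root-threshold} when $b\ge d+1$, use $H\ge1$ to get $e(H)\ge1\ge\sup_xQ(x,x)$, and apply Proposition~\ref{prop:psd-stability}. The only cosmetic difference is that the paper cites Theorem~\ref{thm:rational-roots} for finiteness and Theorem~\ref{thm:uniform-global} for the common bound, whereas you invoke Theorem~\ref{thm:uniform-global} directly for both.
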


\begin{proof}
The polynomial $1+t^2$ has two simple nonzero complex roots. Theorem~\ref{thm:rational-roots} therefore excludes every infinite positive support when $b\ge d+1$. Its base potential is at least one, so $e(H)\ge1$. Proposition~\ref{prop:psd-stability} applies. The even monomials are positive kernels by their tensor representations.
\end{proof}

For example, on $S^2$ every minimizing measure for $(1+(x\cdot y)^2)^{5/4}-\varepsilon(x\cdot y)^{2k}$ is finite for $0\le\varepsilon\le1$. Exponents larger than one are included; the theorem is not restricted to concavity of the scalar function of $(x\cdot y)^2$.

For the original $p$-frame kernel with reduced $p=a/b$ and $b\ge d$, the same stability principle applies whenever $\sup_xQ(x,x)\le d^{-\max\{1,p/2\}}$. Indeed $I_2\ge1/d$; for $p\le2$, $I_p\ge I_2$, while for $p\ge2$, Jensen's inequality gives $I_p\ge I_2^{p/2}\ge d^{-p/2}$.

\section{Why the hypotheses cannot be discarded}\label{sec:limits}

\subsection{Small rational denominators allow positive Nash arcs}

Let $v(t)=(1,t)/\sqrt{1+t^2}$ on a small interval, and put $x(t)=v(t)^{\otimes b}$ in the symmetric tensor space of dimension $b+1$. This is a nonconstant Nash sphere arc. For reduced $p=a/b>0$,
\begin{equation}\label{eq:veronese-example}
|x(s)\cdot x(t)|^{a/b}
=(v(s)\cdot v(t))^a
=\langle v(s)^{\otimes a},v(t)^{\otimes a}\rangle,
\end{equation}
after choosing the interval so $v(s)\cdot v(t)>0$. Thus an infinite positive restriction exists in dimension $b+1$. The threshold at a root at zero is sharp as a general positive-support criterion; this is not a claim that the arc supports a global minimizing measure.

The extra constant in a nonzero-root pencil matters. On the Nash circle parametrization $x(t)=((1-t^2)/(1+t^2),2t/(1+t^2))$,
\begin{equation}\label{eq:shifted-root-example}
\sqrt{1+x(s)\cdot x(t)}=\sqrt2\,v(s)\cdot v(t)
\end{equation}
on a sufficiently short interval. Hence denominator two at the nonzero root $-1$ does not prohibit a positive arc in dimension two. One cannot replace the nonzero-root threshold $d+1$ by $d$ in general.

\subsection{Nonpolynomial and analytic do not mean discrete}

For $f(t)=e^t$, uniform surface probability $\sigma$ is the unique minimizer on $S^{d-1}$ and is diffuse. Its potential is constant by rotation invariance. If $\eta=\mu-\sigma$, uniform convergence of the exponential series gives
\begin{equation}\label{eq:exponential-counterexample}
I_f(\mu)-I_f(\sigma)
=\sum_{n=0}^{\infty}\frac1{n!}
\left\|\int x^{\otimes n}\,d\eta(x)\right\|^2\ge0.
\end{equation}
Equality makes all polynomial moments of $\eta$ vanish, hence $\eta=0$ by Stone--Weierstrass. The finite-singularity criterion does not apply to this entire potential.

Even a nonpolynomial analytic germ can give finite rank on a Nash arc. For a real noninteger $\lambda$, the germ $f(t)=\cos(\lambda\arccos t)$ is analytic at $1$ and continuous on $[-1,1]$. On a short circle arc with angle parameters $s,t$, its kernel is $\cos(\lambda(s-t))$, of rank two. The circle image has a Nash parametrization, although these feature functions need not be Nash. Thus neither ``nonpolynomial'' nor ``irrational parameter'' can replace the precise power or ramification hypotheses.

\subsection{Why continuation and sign assumptions matter}

The ray condition in Theorem~\ref{thm:nash-obstruction} cannot be replaced by real analyticity near the physical interval. For $a>1$,
\begin{equation}\label{eq:ray-counterexample}
\frac1{a-t}=\sum_{n=0}^{\infty}\frac{t^n}{a^{n+1}}
\end{equation}
defines a positive-semidefinite kernel on the entire sphere by its tensor-power expansion. Its pole survives every ramification, but lies on the forbidden ray $[1,\infty)$.

Likewise the nonnegative root in Theorem~\ref{thm:rational-roots} cannot be replaced by an arbitrary signed root. On $S^1$, the potential $f(t)=-(2-t)^{1/3}$ has a simple nonzero root of effective denominator $3=d+1$, yet uniform arclength is its unique minimizer. Indeed
\[
f(t)=-2^{1/3}+\sum_{n\ge1}a_nt^n,\qquad
a_n=2^{1/3-n}(-1)^{n+1}\binom{1/3}{n}>0.
\]
The series converges absolutely and uniformly on $[-1,1]$. The polynomial $\cos^n\theta$ has nonnegative Fourier coefficients and a positive coefficient at frequency $n$, so $f(\cos\theta)$ has strictly positive coefficients at every nonzero frequency. Its energy is minimized precisely when all those Fourier moments vanish, which characterizes arclength. The root at $2$ also violates the ray hypothesis, so this example contradicts neither theorem.

\subsection{The remaining \texorpdfstring{$p$}{p}-frame gap}

For an arbitrary irrational $p>0$ in higher dimension, support positivity alone does not supply finite rank, semialgebraicity of the support, or an accumulating Nash arc. An infinite compact set need not contain any arc. Theorems~\ref{thm:irrational-rank} and~\ref{thm:nash-obstruction} rule out two mechanisms for an infinite minimizer, but they do not prove that one of those mechanisms must occur. The general non-even discreteness problem remains unresolved by these arguments.

The perturbation statements must also be kept distinct. Entire scalar additions preserve the Nash-curve obstruction without a size or sign restriction. On compact semialgebraic curves one may also subtract arbitrary continuous positive-semidefinite kernels, by Corollary~\ref{cor:curve-psd-stability}. The full-sphere statement in Proposition~\ref{prop:psd-stability} instead requires a nonnegative perturbed minimum. We do not claim unconditional full-sphere discreteness under arbitrary entire additions.

\subsection*{Provenance and review status}
This AI-assisted author-review manuscript develops a focused set of deductions from the supplied finite-rank and two-point working papers. Standard support, algebraic, and analytic ingredients are identified separately from the emphasized generalizations. The accompanying contribution and source note records the internal proof-review scope and literature-check limitations. Internal checking is not external refereeing, formal verification, or a definitive priority assessment.

\appendix

\section{Analytic enlargement of a finite-rank restriction}\label{app:analytic-enlargement}

We give the precise curve-selection step used in Theorem~\ref{thm:irrational-rank}. It is a real-analytic statement and makes no assertion that the selected curve is algebraic.

\begin{lemma}[Analytic enlargement]\label{lem:analytic-enlargement}
Let $S$ be an infinite subset of the real unit sphere, with accumulation point $x_0$. Suppose a symmetric kernel is real analytic on an ambient neighborhood of $(x_0,x_0)$ and has finite rank on the nearby part of $S$. Then there is a nonconstant real-analytic sphere arc through $x_0$ on which the same local kernel has finite rank.
\end{lemma}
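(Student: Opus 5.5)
The plan is to replace the finite-rank hypothesis on the nearby part of $S$ by a real-analytic condition, and then apply real-analytic curve selection. Fix a small ball $U$ about $x_0$ on which the kernel $K$ is real analytic on $U\times U$, and let $r<\infty$ be the rank of $K$ on $S\cap U$.

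The first step is to choose anchors $z_1,\ldots,z_r\in S\cap U$ with invertible Gram matrix $G=[K(z_i,z_j)]$, as in~\eqref{eq:anchor-representation}. Rather than work with the Schur complement of $G$, I will use all $(r+1)\times(r+1)$ minors directly. Define
\[
Z=\{x\in S^{d-1}\cap U:\ \det[K(y_i,y_j)]_{i,j=0}^{r}=0\ \text{for all }y_0,\ldots,y_r\in\{x\}\cup\ldots\},
\]
or more cleanly the set $W$ of points $x\in S^{d-1}\cap U$ such that the extended kernel on $(S\cap U)\cup\{x\}$ still has rank at most $r$.

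Concretely, $x\in W$ if and only if the Schur complements vanish:
\[
K(x,y)-k(x)^TG^{-1}k(y)=0\quad\text{for all } y\in S\cap U,
\]
where $k(x)=(K(x,z_i))_i$. This is an intersection of real-analytic equations in $x$. By Noetherianity of germs of analytic sets at $x_0$, the intersection is defined near $x_0$ by finitely many of them, so $W$ is a real-analytic subset of a neighborhood of $x_0$.

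By the anchor identity, $S\cap U\subset W$, so $x_0$ is a non-isolated point of $W$. The real-analytic curve selection lemma then gives a nonconstant analytic arc $x(t)$ in $W\cap S^{d-1}$ with $x(0)=x_0$.

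The remaining step, and the main obstacle, is to check that $K$ restricted to the arc has finite rank. The membership equation only controls $K(x(s),y)$ for $y\in S$, not $K(x(s),x(t))$. I will handle this by enlarging a second time in the other variable. For fixed $x\in W$, the function $y\mapsto K(x,y)-k(x)^TG^{-1}k(y)$ vanishes on $S\cap U$, hence on the analytic Zariski closure $\overline{S}^{an}$ of $S\cap U$ (the smallest analytic germ containing it). The plan is therefore to define $W$ from the outset as this analytic closure of the germ of $S$ at $x_0$. I then symmetrize: first fix $y\in S$ and close in $x$, then fix $x\in W$ and close in $y$, mirroring the two-step argument used for~\eqref{eq:graph-pair-identity}. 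This shows that the Schur complement vanishes on $W\times W$, so
\[
K(x,y)=k(x)^TG^{-1}k(y)\quad\text{on }W\times W,
\]
and hence the rank is at most $r$ on $W$.

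Since $x_0$ is an accumulation point of $S\subset W$, the germ of $W$ at $x_0$ has positive dimension. Curve selection on $W$ (Łojasiewicz) then yields the nonconstant real-analytic sphere arc on which the finite-rank identity holds.
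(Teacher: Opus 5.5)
Your final construction is the paper's. You take $W$ to be the zero set of the ideal $\mathcal I$ of convergent germs at $x_0$ vanishing on nearby $S$; it is finitely generated by Noetherianity and lies in the sphere because $|x|^2-1\in\mathcal I$. You use the residual $R(x,y)=K(x,y)-k(x)^TG^{-1}k(y)$, close first in one variable and then in the other, and finish with curve selection.

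The gap is the sentence asserting that this shows the Schur complement vanishes on $W\times W$. The statement $R(\cdot,y)\in\mathcal I$ is an identity of germs. Writing $R(\cdot,y)=\sum_i a_i^{(y)}q_i$ holds only on some neighborhood $U_y$ of $x_0$, so you get $R(x,y)=0$ only for $x\in W\cap U_y$. Nothing in your argument prevents $U_y$ from shrinking as $y\to x_0$ along $S$. Hence, for a fixed $x\in W$, you do not know that $R(x,y)=0$ for all $y\in S$ near $x_0$. You therefore cannot conclude $R(x,\cdot)\in\mathcal I$, and the second closure step has no input. Your first, discarded definition of $W$ has the same germ-versus-set issue: stabilization of a descending chain of germs does not by itself describe the germ of the infinite set-theoretic intersection.

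The repair is to reverse the order, which is what the paper does: select the arc $\gamma$ in $W$ first, then run both closure steps along $\gamma$.
\begin{enumerate}
\item For fixed $y\in S$ near $x_0$, $R(\gamma(t),y)=0$ for $t$ in an interval that may depend on $y$. But $t\mapsto R(\gamma(t),y)$ is analytic on a fixed connected parameter interval, so it vanishes on all of it.
\item Now, for each fixed $t$ in that interval, $R(\gamma(t),\cdot)$ really does vanish on nearby $S$, hence lies in $\mathcal I$.
\item The same one-variable identity argument then gives $R(\gamma(s),\gamma(t))=0$ on a fixed square. This is the anchor formula on the arc, hence rank at most $r$ there.
\end{enumerate}
The one-variable identity theorem on a fixed interval supplies the uniformity that your set-level claim on $W\times W$ takes for granted. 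The paper notes explicitly that its two-step argument is arranged to avoid assuming a common neighborhood for all germs in the ideal.
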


\begin{proof}
Restrict to a sufficiently small neighborhood and choose finite anchors with invertible Gram matrix $G$ for the restricted rank. The zero-rank case is treated with an empty feature vector. By~\eqref{eq:anchor-representation}, the analytic residual
\begin{equation}\label{eq:analytic-residual}
R(x,y)=K(x,y)-k(x)^TG^{-1}k(y)
\end{equation}
vanishes on the square of the nearby set. Choose a common ambient neighborhood $U$ where all terms are analytic, including the sections against the anchors.

Let $\mathcal I$ be the ideal of convergent real-analytic germs at $x_0$ that vanish on $S$ in some neighborhood of $x_0$, where the neighborhood may depend on the germ. The ring of convergent power series is Noetherian~\cite{KP02}. Choose finite generators $q_1,\ldots,q_m$ and representatives on a common neighborhood. Their zero set contains all sufficiently nearby points of $S$ and has a nonisolated real germ at $x_0$. Real-analytic curve selection gives a nonconstant analytic arc $\gamma$ through $x_0$ in that zero set. Because $|x|^2-1\in\mathcal I$, the arc lies on the sphere after shrinking.

For a fixed nearby $y\in S$, the germ $R(\cdot,y)$ belongs to $\mathcal I$. Hence $R(\gamma(t),y)=0$ near zero, initially on an interval possibly depending on $y$. The function is analytic on a fixed connected arc interval, so it vanishes on the whole interval. Consequently, for every fixed parameter $t$, the germ $R(\gamma(t),\cdot)$ vanishes on nearby $S$ and belongs to $\mathcal I$. Apply the same one-variable identity argument a second time to obtain
\begin{equation}\label{eq:arc-residual}
R(\gamma(s),\gamma(t))=0
\end{equation}
on the product of a fixed small interval with itself. Thus the anchor formula holds on the arc and gives finite rank. This two-step argument avoids assuming a common neighborhood for all germs in the ideal.
\end{proof}

\section*{Acknowledgment of AI assistance}
GPT-6 Astra, accessed through Tasklet, was used to assist with mathematical exploration, the development and checking of arguments, and the drafting and revision of this manuscript, including its \LaTeX{} source. Responsibility for the final mathematical statements, proofs, and references remains with the author.

\bibliographystyle{amsplain}
\bibliography{p-frame-references}
\end{document}